\definecolor{uuuuuu}{rgb}{0.26666666666666666,0.26666666666666666,0.26666666666666666}
\definecolor{xdxdff}{rgb}{0.49019607843137253,0.49019607843137253,1.}
\definecolor{ffqqqq}{rgb}{1.,0.,0.}
\definecolor{uuuuuu}{rgb}{0.26666666666666666,0.26666666666666666,0.26666666666666666}
\definecolor{qqwuqq}{rgb}{0.,0.39215686274509803,0.}
\definecolor{zzttqq}{rgb}{0.6,0.2,0.}
\definecolor{xdxdff}{rgb}{0.49019607843137253,0.49019607843137253,1.}
\definecolor{qqqqff}{rgb}{0.,0.,1.}
\definecolor{cqcqcq}{rgb}{0.7529411764705882,0.7529411764705882,0.7529411764705882}
\theoremstyle{plain}
\newtheorem{theorem}[subsection]{Theorem}
\newtheorem{lemma}[subsection]{Lemma}
\newtheorem{prop}[subsection]{Proposition}
\theoremstyle{definition}
\newtheorem{defi}[subsection]{Definition}
\newtheorem{cor}[subsection]{Corollary}
\newtheorem{remark}[subsection]{Remark}
\newcommand{\sci}{\subset}
\newcommand{\set}[1]{\{#1\}}
\newcommand{\tit}{\textit}
\newcommand{\B}{\boldsymbol}
\newcommand{\C}[1]{\mathcal{#1}}
\newcommand{\te}{\text}
\begin{document}

To appear,  Dynamical Systems 
\title{Optimal Quantization via Dynamics}

\author{Joseph Rosenblatt}
\address{Department of Mathematical Sciences \\
Indiana University-Purdue University Indianapolis\\
402 N. Blackford Street \\
Indianapolis, IN 46202-3217, USA.}
\email{rosnbltt@illinois.edu}

 \author{Mrinal Kanti Roychowdhury}
\address{School of Mathematical and Statistical Sciences\\
University of Texas Rio Grande Valley\\
1201 West University Drive\\
Edinburg, TX 78539-2999, USA.}
\email{mrinal.roychowdhury@utrgv.edu}

\subjclass[2010]{94A34, 37A45, 11J71, 60B05}
\keywords{Optimal quantizers, quantization error, dynamical systems, Diophantine approximation, and independent random variables}
\date{January 24, 2020}
\thanks{}

\begin{abstract}
Quantization for probability distributions refers broadly to estimating a given probability measure by a discrete probability measure supported by a finite number of points. We consider general geometric approaches to quantization using stationary processes arising in dynamical systems, followed by a discussion of the special cases of stationary processes: random processes and Diophantine processes.  We are interested in how close stationary process can be to giving optimal $n$-means and $n^{th}$ optimal mean distortion errors.  We also consider different ways of measuring the degree of approximation by quantization, and their advantages and disadvantages in these different contexts.
\end{abstract}
\maketitle

\pagestyle{myheadings}\markboth{Joseph Rosenblatt and Mrinal Kanti Roychowdhury}{Optimal Quantization via Dynamics}

\section{Introduction}\label{intro}
\subsection{The General Setting}\label{intosubsecA}

Let $\mathbb R^d$ denote the $d$-dimensional Euclidean space with the Euclidean metric $\|\cdot\|$. Let $P$ be a Borel probability measure on $\mathbb R^d$. Let $V_n =V_n(P)$
be
\[V_n(P)=\inf_{\alpha \in \C D_n} \int \min_{a \in \alpha} \|x-a\| dP(x),\]
where $\C D_n:=\set{\alpha \sci \mathbb R^d : 1\leq \te{card}(\alpha)\leq n}$. We assume that $\int\|x\| dP<\infty$ to make sure that there is a set $\alpha^*_n$ for which the infimum occurs (see \cite{AW, GKL, GL, GL2}). Here $V_n(P)$ is the \tit{optimal mean distortion error}.  The set $\alpha_n^*$ for which we get the optimal mean distortion error is called an \tit{optimal set of $n$-means} and the elements of an optimal set are called \tit{optimal quantizers}. For some mathematical background on quantization, see the text by Graf and Luschgy~\cite{GL}. For some recent work on optimal quantizers, see \cite{CR1, CR2, DR, GL3,  R1, R2, R3, R4, RR1}.   Note that it is traditional to use $\|x - a\|^2$ in the definition above, but we do not do this here because we want to keep as simple as possible the connection between the integral itself and various ways of approximating the integrand.

Given a fixed $\alpha_n \in \C D_n$, we  call $d_n^{\alpha_n}(x) = \min_{a \in \alpha_n} \|x-a\|$
the {\em point distortion error}, and we call the mean of the point distortion error,
$V_n^{\alpha_n} = \int d_n^{\alpha_n}(x) \, dP(x) = \int \min_{a \in \alpha_n} \|x-a\| dP(x)$, the {\em mean distortion error}.
We consider various methods of generating {\em model} $\alpha_n$. We are particularly
interested in estimates for the rate of convergence to zero, as $n$ goes to infinity, of the distortion errors of model sequences that have been generated by some dynamical stochastic process.  If we know both a) the rate of convergence to zero of the distortion errors in the optimal case, and b) in a given model we have good estimates for the rate of convergence to zero of the distortion errors, then we will have some understanding of how far from optimality these models are.

Clearly, if we had a uniform estimate for the rate of convergence to zero of the point distortion error,
then by Lebesgue's Bounded Convergence Theorem  we would obtain a rate estimate for the mean
distortion error.  Indeed, given $(\rho_n)$, even if we just had, for a.e. $x$ with respect to  $P$, an
estimate $\min_{a \in \alpha_n} \|x-a\| = O(\rho_n)$ as $n\to \infty$, we would know  that also $\int \min_{a \in \alpha_n} \|x-a\| dP(x) = O(\rho_n)$ as $n \to \infty$.   However,
estimates for the mean distortion error will generally give us limited insight into the best estimates for the point distortion error that would hold for a.e. $x$.

In the case that the support $K$ of $P$ is compact, there is a geometric approach for getting a uniform estimate for the point distortion error.  Take any $\alpha_n  \in \C D_n$.  Think of it as generated by some process that we believe has a distortion error that is close to the optimal distortion error.  Let $r_n^{\alpha_n}$ be the minimum radius $r$ such that the balls of radius $r$ with centers at $a \in \alpha_n$ cover $K$.   We call $r_n^{\alpha_n}$ the {\em geometric distortion error}.  This distance provides a good geometric proxy for the distortion errors.  Indeed, it is clear that uniformly the point distortion error for $\alpha_n$ is not larger than $r_n^{\alpha_n}$ and hence the minimal mean distortion error $\int \min_{a \in \alpha_n^*} \|x-a\| dP(x)$
is no larger than $r_n^{\alpha_n}$.  We will see that many stochastic processes can give good estimates for $r_n^{\alpha_n}$ with a suitable choice of $\alpha_n$, and hence using these processes not only would we have an estimate for the point distortion error, but we also would have an estimate for the mean distortion error.

\subsection{Using Stochastic Processes}\label{introsubsecB}

Here is the full range of how one might consider distortion errors generated by some sequence of stochastic processes $\mathbf {\mathcal T} =(\mathcal T_k:k\ge 1)$.  We assume that the processes are defined on an underlying state space $(\Omega,\beta_p,p)$ which might be a probability space, but generally would be a $\sigma$-finite measure space.  For $\omega \in \Omega$, the range values of $\mathcal T_k(\omega)$ are in another measure space $(Y,\beta_P,P)$ which is also naturally a metric space with metric $\delta_Y$.  Elements $\omega \in \Omega$ are the parameters and the values $(\mathcal T_k(\omega): k \ge 1)$ the quantizers associated with the parameter $\omega$.

Given $\omega \in \Omega$ and $y \in Y$, we form the point distortion errors $d_n^{\mathbf {\mathcal T}}(\omega,y) = \min\limits_{1 \le k \le n} \delta_Y(y,\mathcal T_k(\omega))$.  The mean distortion error $I_n^{\mathbf {\mathcal T}}(\omega) = \int d_n^{\mathbf {\mathcal T}}(\omega,y) \, dP(y)$ is itself a stochastic process implicitly dependent on the choice of $\omega$.  For a full analysis, we want to calculate the distribution function and  all the moments of $d_n^{\mathbf {\mathcal T}}(\omega,y)$.  So in particular, we would want to know the distributional behavior and moments of $I_n^{\mathbf {\mathcal T}}(\omega)$.   It would be ideal to have the distribution functions and moments of these quantities  explicitly in terms of any additional parameters defining the stochastic process $(\mathcal T_k:k\ge 1)$.  But we may have to accept just  good estimates for sizes of these as a function of $n$.  In the case when the range space $Y$ is actually a Euclidean space $\mathbb R^l$ with the usual Euclidean metric for $\delta$, then we would also want to know how these quantities depend on the dimension $l$.

As indicated above, we can always try to overestimate these quantities using the geometric distortion error.  It turns out that sometimes the best we can do is to overestimate the geometric distortion error using the discrepancy of the stochastic processes $(\mathcal T_k:k\ge 1)$.  This discrepancy is the main tool in Monte-Carlo integration for the stochastic process and it plays an important role in quantization too.    In this case, the geometric distortion would be calculated in terms of $\delta_Y$ and would be a function primarily of $\omega$ and $n$.  The discrepancy would also depend on $\omega$ and $n$, but would be calculated with respect to a test family of sets, for example the $l$-dimensional bounded rectangles in the case that $Y = \mathbb R^l$.

In summary, this is the basic program that we use.  We choose a sequence $\alpha = (\alpha(k): k \ge 1)$ by via some stochastic process that is explicit, even though it might not give the optimal quantizers.  We then compute an estimate for the mean distortion error, the point distortion error, or the geometric distortion error using the $n$ points $\alpha_n = (\alpha(1),\dots,\alpha(n))$.  Say we have done this for the geometric distortion error.  We know that
\[V_n(P) \le \int \min_{1 \le k \le n} \|x-\alpha (k)\| dP(x) \le r_n^{\alpha_n}.\]
If we also have $r_n^{\alpha_n} \le K V_n(P)$, then we have a good estimate for the optimal mean distortion error using the geometric distortion error.  The best outcome would be if in fact
\[V_n(P) \sim  \int \min_{1 \le k \le n} \|x-\alpha (k)\| dP(x) .\]
Then $\alpha_n = (\alpha(1),\dots,\alpha(n))$ would be what we call an {\em asymptotically optimal set of $n$-means}.   An even better situation would be if in addition we had $r_n^{\alpha_n} \sim V_n(P)$.  However, this last estimate, indeed both of these, seem to rarely be the case.

\subsection{Outline}\label{introsubsecC}

The comments above explain why we focus on results for stochastic processes that give estimates of the form $r_n^{\alpha_n} = O(\rho_n)$ with $\rho_n \to 0$ as $n \to \infty$.  We are most interested in explicit sequences that give asymptotically optimal $n$-means, for given probability measures, but these are difficult to obtain.  So we are also interested in methods that would seem likely to give good estimates of this type, and how close the error rate is to that of the asymptotically optimal $n$-means.

In this article, we first consider general dynamical models in Section~\ref{sec2} for which there usually is correlation of the outputs.  We prove a Baire category result to show that there is no global distortion rate for this large class.  Then we discuss what is available for the more special case of random models in Section~\ref{sec3} with uncorrelated variables.  Following this, we look at another special class of dynamical systems, the Diophantine models in Section~\ref{sec4}.  For these there is again correlation, but now number theory plays a critical role.  Each of these approaches has advantages over the other.  They also have advantages over carrying out the detailed, hard work needed to construct explicit optimal $n$-means.  The trade-off is that one generally obtains only at best an estimate that is on the order of the optimal results.

\section{Dynamical models}\label{sec2}

Suppose $\tau$ is an invertible, Lebesgue measure $m$ preserving transformation of $[0,1]$.  Sometimes we may want to consider maps acting on probability spaces with a more complicated geometry, but for many of the points we want to make in this article using $[0,1]$ as the underlying space is perfect for giving the basic results.

Suppose $\tau$ is ergodic.  Then for a.e. $x \in [0,1]$, the orbit $\{\tau^k(x): k \ge 1\}$ is dense in $[0,1]$.  For any $y \in [0,1]$, consider the approximation $d_n^\tau(x,y) = \min\limits_{1\le k \le n} |\tau^k(x)-y|$.  We know that for every $y$ and a.e. $x$, we have $d_n^\tau(x,y) \to  0$ as $n\to \infty$.  We would like to know estimates for the rate that this tends to zero.  Some ways this could be considered are these, with each condition being weaker than the previous one: a) what rate does the $\max\limits_{y \in [0,1]} d^\tau_n(x,y)$ tend to zero for a.e. $x$, b) what rates does $d_n^\tau(x,y)$ tend to zero for every (or just a.e.) $y$ and a.e. $x$, or c) what rate does $\int_0^1 d^\tau_n(x,y)\, dm(y)$ tend to zero for a.e. $x$?   The ergodicity of $\tau$ guarantees that $M_n^\tau = \max_{y\in [0,1]} d_n^\tau(x,y)$ tends to zero for a.e. $x$, and so each of these measures do converge to zero.  We are asking for types of quantization error rate given by the $n$ means $\{\tau(x),\dots,\tau^n(x)\}$.

We do want to keep in mind that there is another quantization error that is stronger than all of the ones above: a gap measurement.  For each $n$, take $(x_k:0 \le k \le n+1)$ to be $(\tau^k(x): 1 \le k \le n)\cup \{0,1\}$ in increasing order i.e. $0 = x_0 \le x_1 \le \dots \le x_n \le x_{n+1} = 1$.  Let the gap measurement be $g_n^\tau (x) = \max \{x_{k+1} - x_k: 0 \le k \le n\}$.  Knowing the rate this goes to zero would be the best we could expect to do since $g_n^\tau (x) \ge 2 \max\limits_{y \in [0,1]} d^\tau_n(x,y)$.  But also, the geometric distortion error is the same thing since $2r_n^\tau(x) = g_n^\tau (x)$.

It is clear that the same question as asked above can be considered for any minimal dynamical system.  That is, take a compact metric space $(X,d_X)$ and a minimal homeomorphism $\tau$ of $X$.  Let $d_n^\tau(x,y) = \min\limits_{1\le k \le n} d_X(\tau^k(x),y)$.  What can be said about how fast this tends to zero for each $y$ or maximized over $y \in X$?

\begin{remark}\label{OtherGeo}
We expect that the questions above for ergodic dynamical systems might be best asked in the original natural geometric structure on which the map $\tau$ is defined.  For example, take an ergodic automorphism $\tau$ of the two torus $\mathbb T^2$.   Replace the distance function of absolute value by the two dimensional Euclidean distance inherited on $\mathbb T^2$ from the natural map $\pi: [0,1]^2 \to \mathbb T^2$.  See Remark~\ref{Auto} for more details.

We could also be considering a differentiable mapping $\tau$ on a manifold $M$ that is ergodic with respect to a natural probability measure on $M$.  These types of geometric structures have been studied extensively from the viewpoint of generalizing the classical discrepancy calculations that we commented on earlier.  But there seems to be not as much known about the quantization approximation itself.
\end{remark}

It is evident also from a  number of cases that a perhaps better measure of the quantization error would be $I_n^\tau = \int_0^1 d_n^\tau(x,y) \, dm(y)$, taken for any $x \in [0,1]$.  Clearly, $I_n^\tau(x) \le M_n^\tau(x)$.
See in particular this issue as discussed in Section~\ref{sec3} and Section~\ref{sec4}.

\subsection{Rates are Always Non-generic}\label{Nongeneric}

The above discussion show us that the following Baire category result gives us some important information about the limits of geometric distortion errors for the class of stationary dynamical systems.    We take $\mathcal M$ to be the group of invertible, measure-preserving transformations of the probability space $([0,1],m)$.  We consider the weak topology on $\mathcal M$ i.e. the strong operator topology on the group of continuous linear operators $T^\tau (f) = f\circ \tau$, with $f\in L^2([0,1])$.  A standard fact is that the weak topology on $\mathcal M$ is given by a complete pseudo-metric.  In particular, it is a {\em Baire space} i.e. any intersection of a countable set  of open, dense sets is also dense.  We say that a set $\mathcal G$ in a Baire space is residual if it contains such a dense $G_\delta$ set, and a set  $\mathcal F$ is meager if it is a complement of a residual set.

\begin{theorem}\label{Notypical} Take a fixed $\delta > 0$ and $\rho_n \to 0$, $\rho_n > 0$ for all $n$.  Consider the set $\mathcal F$ of maps $\tau\in \mathcal M$ such that on a set of $x \in [0,1]$ with measure at least $\delta$, we have $I_n^\tau(x) \le K\rho_n$ for some constant $K$ and for large enough $n$, both depending on $x$.  Then this is a meager set in the weak topology; actually its complement is a dense $G_\delta$ set.
\end{theorem}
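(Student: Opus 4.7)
The plan is to show $\mathcal{F}$ is contained in an $F_\sigma$ meager subset of $\mathcal{M}$; its complement will then be a dense $G_\delta$ lying inside $\mathcal{F}^c$. For each $(K,N) \in \mathbb{N}\times\mathbb{N}$ I set
\[E_\tau^{K,N} := \{x \in [0,1] : I_n^\tau(x) \le K\rho_n \text{ for all } n \ge N\} = \bigcap_{n\ge N}\{x : I_n^\tau(x) \le K\rho_n\};\]
this is monotone nondecreasing in $K$ and in $N$, and $\tau \in \mathcal{F}$ is equivalent to $\sup_{K,N} m(E_\tau^{K,N}) \ge \delta$. Hence $\mathcal{F} \subseteq \bigcup_{K,N} \mathcal{G}_{K,N}$ where $\mathcal{G}_{K,N} := \{\tau \in \mathcal{M} : m(E_\tau^{K,N}) \ge \delta/2\}$, so it suffices to prove each $\mathcal{G}_{K,N}$ is closed with empty interior.

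For closedness I verify that $\tau \mapsto m(E_\tau^{K,N})$ is upper semicontinuous. Weak convergence $\tau_j \to \tau$ means $U_{\tau_j} \to U_\tau$ in the strong operator topology on $L^2([0,1])$; a telescoping identity combined with the isometry of each $U_{\tau_j}$ gives $U_{\tau_j}^k \to U_\tau^k$ in SOT for every $k$, and hence $\tau_j^k \to \tau^k$ in measure. Since the map $(a_1,\ldots,a_n) \mapsto \int_0^1 \min_k|y - a_k|\,dm(y)$ is Lipschitz, $I_n^{\tau_j} \to I_n^\tau$ in measure for every $n$. A standard $\epsilon$-padding argument then yields, for each finite $M \ge N$ and each $\epsilon > 0$,
\[\limsup_j m\Bigl(\bigcap_{N \le n \le M}\{I_n^{\tau_j} \le K\rho_n\}\Bigr) \le m\Bigl(\bigcap_{N \le n \le M}\{I_n^\tau \le K\rho_n + \epsilon\}\Bigr);\]
letting $\epsilon \downarrow 0$ (continuity of measure on the finite intersection) and then $M \to \infty$ (the outer intersection decreases to $E_\tau^{K,N}$) delivers $\limsup_j m(E_{\tau_j}^{K,N}) \le m(E_\tau^{K,N})$, as required.

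For empty interior I invoke the Rokhlin--Halmos theorem that periodic transformations are dense in $\mathcal{M}$ under the weak topology. If $\tau$ has period $p$, then the orbit of any $x$ contains at most $p$ distinct points, so $I_n^\tau(x) = I_p^\tau(x) > 0$ for all $n \ge p$ (any finite set of quantizers produces strictly positive mean distortion). Because $K\rho_n \to 0$, for every $x$ there is $n \ge \max(N,p)$ with $K\rho_n < I_p^\tau(x) = I_n^\tau(x)$, so $E_\tau^{K,N} = \varnothing$ and $m(E_\tau^{K,N}) = 0 < \delta/2$. Hence every periodic $\tau$ lies outside $\mathcal{G}_{K,N}$, forcing $\mathcal{G}_{K,N}$ to have empty interior.

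Combining, each $\mathcal{G}_{K,N}$ is nowhere dense; $\bigcup_{K,N}\mathcal{G}_{K,N}$ is therefore $F_\sigma$ and meager, and its complement $\bigcap_{K,N}\mathcal{G}_{K,N}^c$ is the desired dense $G_\delta$ lying inside $\mathcal{F}^c$, by the Baire category theorem. The main obstacle will be the upper semicontinuity step: weak convergence only yields convergence of the iterates in measure, so one must pair the $\epsilon$-padding trick with two successive applications of continuity of measure (first as $\epsilon \downarrow 0$ on a finite intersection, and then as $M \to \infty$ to reach the infinite intersection defining $E_\tau^{K,N}$) in order to pass from the finite-$M$ bound to the full statement.
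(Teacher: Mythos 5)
Your proof is correct and follows the paper's broad strategy---exhibit $\mathcal F$ inside a countable union of closed sets and show each has empty interior by finding a degenerate transformation in every weak neighbourhood---but the key steps are executed differently, and in two places more carefully. For closedness, the paper claims pointwise-in-$x$ convergence $I_n^{\tau_s}(x)\to I_n^\tau(x)$, but its formula $\int\min_k|x-\tau_s^k(y)|\,dm(y)$ has the roles of $x$ and $y$ swapped relative to the definition $I_n^\tau(x)=\int_0^1\min_{1\le k\le n}|\tau^k(x)-y|\,dm(y)$ from Section~\ref{sec2}; for the latter quantity weak convergence only yields $\tau_s^k\to\tau^k$ in measure and hence $I_n^{\tau_s}\to I_n^\tau$ in measure, and your upper-semicontinuity argument with the $\epsilon$-padding and the two passes of continuity of measure is exactly what is needed to close the argument for the quantity actually in play. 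For empty interior, you invoke Halmos' weak approximation theorem to place a genuinely periodic $\tau_0$ (with $\tau_0^p=\mathrm{id}$) in any weak open set, after which the argument is one observation: every orbit has at most $p$ points, so $I_n^{\tau_0}(x)$ equals $I_p^{\tau_0}(x)$ and is bounded below by the optimal $p$-point mean error, which is positive, defeating any rate $\rho_n\to0$. The paper instead takes a cyclic transformation that merely permutes the levels of a Rokhlin tower---a map which need not be of finite order---and therefore has to carve out small intervals $S_j$ avoided by orbits and arrange $m(\bigcup_j D_j)\ge 1-\delta/2$; your choice of a finite-order approximant makes that machinery unnecessary and is the cleaner route. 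Finally, your uniformized sets $E_\tau^{K,N}$ and the $\delta/2$ margin correctly account for the $x$-dependence of $K$ and $N$ in the hypothesis, which the paper's decomposition of $\mathcal F$ as an $F_\sigma$ with the full $\delta$ glosses over; note, though, that like the paper you establish meagerness by placing $\mathcal F$ inside a meager $F_\sigma$ (equivalently, a dense $G_\delta$ inside $\mathcal F^c$), which is what the substance of the theorem requires, rather than that $\mathcal F^c$ is itself a $G_\delta$.
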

\begin{proof} We write $\mathcal F$ as
\[\bigcup\limits_{K=1}^\infty \bigcup\limits_{N=1}^\infty \bigcap\limits_{n=N}^\infty \{\tau: m\{x: I_n^\tau(x) \le K \rho_n\} \ge \delta\}.\]
Let $\mathcal B_N = \bigcap\limits_{n=N}^\infty \{\tau: m\{x: I_n^\tau(x) \le K \rho_n\} \ge \delta\}.$  We can see this is closed in the weak topology determined by $\Delta$ if we show each of the sets $\{\tau: m\{x: I_n^\tau(x) \le K \rho_n\} \ge \delta\}$ is closed in this topology.  We can see this last fact is true as follows.  Take any sequence $(\tau_s)$ in $\{\tau: m\{x: I_n^\tau(x) \le K \rho_n\} \ge \delta\}$.  Suppose $(\tau_s)$ converges to $\sigma \in \mathcal M$ in the metric $\Delta$.  This means that for all $k \ge 1$, $\|f\circ \tau^k_s - f\circ \sigma^k\|_1 \to 0$ as $s\to \infty$ for every $f\in L^1(X)$.  So for all $k\ge 1$ and $x$, $\int |x - \tau_s^k(y)|\, dm(y) \to \int |x - \tau^k(y)|\, dm(y)$ as $s\to \infty$.  Hence, for all $n$ and $x$,
 $I_n^{\tau_s}(x) = \int \min\limits_{1 \le k \le n} |x - \tau_s^k(y)|\, dm(y)$ converges to
 $I_n^\tau (x) =  \int \min\limits_{1 \le k \le n} |x - \tau^k(y)|\, dm(y)$ as $s\to \infty$.  Thus,
 because $m\{x: I_n^{\tau_s}(x) \le K \rho_n\} \ge \delta$ for all $s$, we also have
 $m\{x: I_n^\tau (x) \le K \rho_n\} \ge \delta$.

This shows that $\mathcal F$ is an $F_\sigma$ set in the weak topology.  So to prove our result, we need only to show that $\mathcal B_N$ contains no interior.  Suppose on the contrary that $\tau \in W \subset B_N$ with $W$ being a weak open set.  Then by standard methods one can show there is a cyclic transformation $\tau_0 \in W$.  Even more, we can have a partition $\{E_j: j = 1,\dots,J\}$ of $X$ with each set $m(E_j) = 1/J$ and such that if $\tau_0$ is any measure preserving map that sends the sets $E_j$, sending $E_j$ to $E_{j+1}$ for all $j$, and $E_{J}$ to $E_1$, then $\tau_0 \in W$.  Now (if necessary) we modify $\tau_0$ (but keeping it a cyclic permutation of the sets $E_j$ as above).  First, take intervals $S_j$ of the same small, positive length and let $D_j = E_j\backslash S_j$ and $C_j = E_j\cap D_j$.  We now need to also adjust the lengths of $S_j$, keeping them of some small, positive length (possible different) so that the sets $D_j$ all have the same measure.  We now modify $\tau_0$ so that it actually cyclically permutes the sets $D_j$ in the sense above, and as a result the sets $C_j$ too, with the same permutation of the index $j \to j+1, 1 \le j < J$ and $J \to 1$.  These adjustments are made so that $\tau_0$ is still in $W$.  Considering the center half of the intervals $S_j$, we see that for all $x \in \bigcup\limits_{j=1}^J D_j$, we have for all $m$,  $I_m^{\tau_0}(x) \ge \gamma$ for some, possibly quite small, value $\gamma > 0$.  Now, as part of these adjustments, we can arrange that $\bigcup\limits_{j=1}^J D_j$ has measure at least $1 - \frac {\delta}2$.   This combination of estimates guarantees that for all $m$, there exists $x$ such that $\gamma \le I_m^{\tau_0}(x) \le K\rho_m$.
But since $\rho_m \to 0$ as $m\to \infty$, we cannot have $\gamma \le  K\rho_m$ for all $m$.   The conclusion is that $B_N$ cannot have interior in the weak topology.
\end{proof}

\begin{remark}  This result means that there is no rate for $I_n^\tau$, no matter how slow, for the quantization error rate which would apply to a Baire category large set of maps $\mathcal M$.  Since the ergodic mapping are themselves a dense $G_\delta$ set, for any specific quantization error rate as above, there would be (many) ergodic mappings that do not satisfy this condition.  This means the same thing would be true when attempting to get a general rate result for $d_n^\tau(x)$.   Moreover, it means that there cannot be a pointwise rate either i.e. some rate $(\rho_n)$ such that for all $\tau \in \mathcal M$, $I_n^\tau(x) \le \rho_n$ for large enough $n$ depending on $x$, which holds for all $x$ in some set of non-zero measure.
\end{remark}

\begin{remark}  There has recently been a similar result proved by A. Junqueira~\cite{J} in the topological setting.  It seems likely that just assuming a one point rate as in \cite{J} will not be adequate in the measure-theoretic category because maps can be changed on a null set without changing their measure-theoretic dynamics.
\end{remark}

We state this corollary of Theorem~\ref{Notypical} to emphasize what has been proved.

\begin{cor}\label{nogap}  Take a fixed $\delta > 0$ and $\rho_n \to 0, \rho_n > 0$ for all $n$.  Consider the set $\mathcal F$ of maps $\tau \in \mathcal M$ such that on a set of $x\in [0,1]$ with measure at least $\delta$, we have the maximum gap $g_n^\tau(x) \le K \rho_n$ for some constant $K$ and for all large enough $n$, depending on $x$.  Then this is a meager set in the weak topology; actually its complement is a dense $G_\delta$ set.
\end{cor}

This is not to say that there cannot be a good rate result for a large  set of ergodic maps.  Indeed, take any ergodic rotation $\tau_\alpha$ given by $\alpha$ whose terms in the simple continued fraction for $\alpha$ are bounded.  Then the result in Graham and Van Lint~\cite{GVL}, and also in Schoissengeier~\cite{Sch} shows that $\rho_n = \frac 1n$ does work for the quantization error rate.

It is not clear how pervasive this rate result can be since conjugating these examples does not necessarily preserve the gap structure.  However, we can certainly get some rate for a dense class.  Just take a countable dense set of ergodic mappings and use diagonalization to obtain a sufficiently slow rate.  This gives,

\begin{prop}  There is a countable set $\mathcal S$ of ergodic mappings in $\mathcal M$ which is dense in weak topology and a decreasing sequence $\rho_n \to 0$ as $n\to \infty$ such that $r_n^\tau(x) = O(\rho_n)$ for all $\tau \in \mathcal S$ and a.e. $x$.
\end{prop}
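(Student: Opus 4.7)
My plan is to follow the recipe sketched immediately before the statement: produce a countable weakly dense set of ergodic maps, extract a pointwise almost-everywhere rate for each separately, and then collapse these countably many rates into a single sequence $\rho_n \to 0$ by a slow diagonalization. First I would note that the ergodic transformations in $\mathcal M$ form a weakly dense $G_\delta$ (the classical Halmos result invoked in the remark above), and that $\mathcal M$ is separable in its weak topology; hence the ergodic subspace is itself Polish and contains a countable set $\mathcal S = \{\tau_j : j \ge 1\}$ that is dense in $\mathcal M$.

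For each fixed $\tau_j$, ergodicity gives a full-measure set of $x \in [0,1]$ whose forward orbit under $\tau_j$ is dense in $[0,1]$, and on this set the geometric distortion $r_n^{\tau_j}(x)$ --- half the largest gap after $n$ iterates, as recalled before Section~\ref{Nongeneric} --- tends to $0$. I would then apply Egorov's theorem: for each $k \ge 1$, pick a measurable $A_{j,k} \subseteq [0,1]$ with $m(A_{j,k}) \ge 1 - 2^{-k}$ on which the convergence $r_n^{\tau_j} \to 0$ is uniform, and record a decreasing rate $\sigma_n^{(j,k)} \downarrow 0$ with $r_n^{\tau_j}(x) \le \sigma_n^{(j,k)}$ for every $x \in A_{j,k}$ and every $n$. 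Since $\bigcup_k A_{j,k}$ has full measure, each almost every $x$ lies in some $A_{j,k}$ and is thus eventually controlled by the rate $\sigma_n^{(j,k)}$.

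Finally, the double-indexed collection $\{\sigma_n^{(j,k)} : j,k \ge 1\}$ is countable, so I would enumerate it as $(\sigma_n^{(l)})_{l\ge 1}$, choose an increasing sequence $N_1 < N_2 < \cdots$ with $\sigma_n^{(l')} \le 1/l$ for all $l' \le l$ and $n \ge N_l$, and set $\rho_n = 1/l$ for $N_l \le n < N_{l+1}$. Then $\rho_n \downarrow 0$, and for each fixed $l$ we have $\sigma_n^{(l)} \le \rho_n$ once $n \ge N_l$. For each $\tau_j \in \mathcal S$ and $m$-a.e. $x$, picking a $k$ with $x \in A_{j,k}$ yields $r_n^{\tau_j}(x) \le \sigma_n^{(j,k)} \le \rho_n$ for $n$ large, which is even stronger than the required $r_n^{\tau_j}(x) = O(\rho_n)$.

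The main obstacle I foresee is essentially bookkeeping rather than conceptual: the diagonal sequence $\rho_n$ must decrease slowly enough to eventually dominate each of the countably many Egorov rates $\sigma_n^{(j,k)}$ (whose individual speeds of decay are not controlled) while still tending to $0$. The two-step construction above --- Egorov to convert a.e. convergence into a uniform rate on a nearly full set, then a slow diagonal to merge all such rates --- handles this automatically, and no dynamical input beyond Halmos's density theorem and the a.e. denseness of orbits under an ergodic map is needed.
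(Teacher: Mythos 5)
Your proposal is correct and takes essentially the same route the paper sketches in the sentence immediately preceding the proposition (take a countable weakly dense set of ergodic maps and diagonalize to obtain a single sufficiently slow rate). The only detail you add beyond the paper's one-line sketch is the Egorov step, which is indeed needed: for a single ergodic $\tau$ the pointwise convergence $r_n^\tau(x) \to 0$ has no uniform a.e.\ rate, so the diagonalization must run over the countable family of Egorov rates $\sigma_n^{(j,k)}$ rather than over one rate per map.
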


\begin{remark}  It would be very useful to know that via cutting and stacking one can construct an ergodic, rank one map $\tau$ such that $2nd_n^\tau (x)\to 1$ as $n\to \infty$ for a.e. $x$.  It is not clear what rate works for maps like Chacon's map, and it is not likely to have this property.  But another such construction, with many divisions and some spacers at each inductive step should give at least an ergodic mapping, and one with this asymptotically optimal point distortion rate.
\end{remark}

\subsection{Discrepancy}\label{Discrepancy}
The quantization process is closely related to discrepancy estimates. See Kuipers and Niederreiter~\cite{KN}, especially the chapter notes, for a wealth of background information and references on discrepancy.  Another more recent reference that is also excellent is Drmota and Tichy~\cite{DT}.  We again take our interval modulo one, but we suppress this in the notation for simplicity.

\begin{defi} Given a sequence $ \B{\alpha} = (\alpha(k):k\ge 1)$ in $[0,1]$, the discrepancy \[D_n(\B{\alpha}) =
\sup\{|\frac 1n\sum\limits_{k=1}^n 1_{[x,y]}(\alpha(k)) - (y - x)|: 0 \le x < y \le 1\}.\]
The discrepancy
\[D_n^*(\B{\alpha}) =
\sup\{|\frac 1n\sum\limits_{k=1}^n 1_{[0,y]}(\alpha(k)) - y|: 0 <  y \le 1\}.\]
\end{defi}

\noindent It is easy to see that $D_n^* \le D_n \le 2D_n^*$.

Now if $D_n < \delta$, then for any interval $I$ of length $\delta$, there must be some $\alpha_k \in I$ with $k \le n$.  So $\min\limits_{1 \le k \le n} |x - \alpha(k)| \le \delta$.  Hence, we have the following useful basic estimate:

\begin{lemma}\label{gapvsD} For any $x \in [0,1]$, we have $\min\limits_{1 \le k \le n} |x - \alpha(k)| \le D_n(\alpha)$.
\end{lemma}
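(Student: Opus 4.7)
The plan is a direct estimate: set $\eta := \min_{1 \le k \le n} |x - \alpha(k)|$ and show $\eta \le D_n(\alpha)$. The case $\eta = 0$ is trivial, so I will assume $\eta > 0$. The central observation is that by definition of $\eta$ no $\alpha(k)$ lies in the open interval $(x - \eta, x + \eta)$, so I only need to produce a closed subinterval of $[0,1]$ sitting inside this open interval whose length is close to $\eta$; feeding it into the definition of discrepancy will then do the rest.

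For a parameter $\varepsilon \in (0,\eta)$ I take
$$I_\varepsilon = \bigl[\max(0,\,x-\eta+\varepsilon),\ \min(1,\,x+\eta-\varepsilon)\bigr].$$
By construction $I_\varepsilon \subseteq (x-\eta, x+\eta) \cap [0,1]$, so $\sum_{k=1}^n 1_{I_\varepsilon}(\alpha(k)) = 0$. A short case analysis on which truncations are active shows that $|I_\varepsilon| \ge \eta - \varepsilon$ uniformly in $x \in [0,1]$: if neither truncation fires, the length is $2\eta - 2\varepsilon$; if only the left endpoint is truncated at $0$, the length is $x + \eta - \varepsilon \ge \eta - \varepsilon$ because $x \ge 0$; the right-truncation case is symmetric; and if both truncations fire the length is $1$, which is at least $\eta - \varepsilon$ because $\eta \le 1$.

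Plugging $I_\varepsilon$ into the definition of $D_n(\alpha)$ then yields
$$D_n(\alpha) \ge \left|\frac{1}{n}\sum_{k=1}^n 1_{I_\varepsilon}(\alpha(k)) - |I_\varepsilon|\right| = |I_\varepsilon| \ge \eta - \varepsilon,$$
and letting $\varepsilon \to 0^+$ gives $D_n(\alpha) \ge \eta$, which is the claim. The only mild obstacle in this argument is the bookkeeping around $x$ sitting near the endpoints $0$ or $1$, where the naive interval $(x-\eta, x+\eta)$ falls partly outside $[0,1]$; this is precisely what the $\varepsilon$-shrinking together with the $\max$/$\min$ truncations are designed to absorb, and everything else is a direct unpacking of the definitions.
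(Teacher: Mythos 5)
Your proof is correct and rests on exactly the same idea as the one-line argument the paper gives immediately before the lemma (if $D_n(\alpha) < \delta$, then every subinterval of $[0,1]$ of length $\delta$ must contain some $\alpha(k)$, hence $\min_k |x-\alpha(k)| \le \delta$). You have simply phrased it directly rather than contrapositively, and you spell out explicitly the truncation bookkeeping near the endpoints $0$ and $1$ that the paper leaves implicit.
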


In particular, we can consider the discrepancy $D_n(O_\tau(x))$ where $O_\tau(x)$ is the forward time sequence $(\tau^k(x): k \ge 1)$ for an erogdic map $\tau$.  As a consequence of the estimate in Lemma~\ref{gapvsD} and the non-generic behavior in Theorem~\ref{Notypical},
we have this Baire category result.

\begin{cor}\label{NotypicalDiscrepancy}  Take a fixed $\delta > 0$ and $\rho_n \to 0$, $\rho_n > 0$ for all $n$.  Consider the set $\mathcal F$ of maps $\tau\in \mathcal M$ such that on a set of $x \in [0,1]$ with measure at least $\delta$, we have $D_n(O_\tau(x)) \le K\rho_n$ for some constant $K$ and for large enough $n$, both depending on $x$.  Then this is a meager set in the weak topology on $\mathcal M$.
\end{cor}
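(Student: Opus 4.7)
The plan is to reduce this corollary directly to Theorem~\ref{Notypical} by using the pointwise bound of Lemma~\ref{gapvsD}, avoiding any fresh Baire category argument. The basic observation is that $D_n(O_\tau(x))$ controls the uniform point distortion along the orbit starting at $x$, and hence \emph{a fortiori} the integrated distortion $I_n^\tau(x)$; so ``good discrepancy along an orbit'' is a strictly stronger condition on $\tau$ than ``good mean distortion along an orbit,'' and the latter has already been shown to be non-generic.

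Concretely, I would fix $\tau$ in the set $\mathcal F$ of the corollary together with a measurable $E_\tau \subseteq [0,1]$ of measure at least $\delta$ on which each $x$ admits constants $K(x)$ and $N(x)$ with $D_n(O_\tau(x)) \le K(x)\rho_n$ for all $n \ge N(x)$. For such an $x$, applying Lemma~\ref{gapvsD} to the sequence $\alpha(k) = \tau^k(x)$ gives
\[
\min_{1\le k\le n}|y - \tau^k(x)| \;\le\; D_n(O_\tau(x)) \;\le\; K(x)\rho_n
\]
for every $y \in [0,1]$ and every $n \ge N(x)$. Integrating in $y$ against Lebesgue measure then yields $I_n^\tau(x) \le K(x)\rho_n$ for the same range of $n$. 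Hence $\tau$ belongs to the set $\mathcal F$ of Theorem~\ref{Notypical} with the same $\delta$, the same rate $(\rho_n)$, and the same pointwise data $(K(x), N(x))$ on $E_\tau$.

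This establishes a literal inclusion between the two versions of $\mathcal F$, so the meagerness asserted by Theorem~\ref{Notypical} transfers immediately. There is no serious obstacle; the one point requiring care is matching the quantifiers, ensuring that the freedom to let $K$ and $N$ depend on $x$ in the hypothesis is preserved through the inclusion, which it is since the same pair $(K(x), N(x))$ witnesses both conditions.
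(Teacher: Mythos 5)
Your reduction is exactly the paper's argument: the corollary is stated there as an immediate consequence of Lemma~\ref{gapvsD} (which gives $\min_{1\le k\le n}|y-\tau^k(x)|\le D_n(O_\tau(x))$) combined with Theorem~\ref{Notypical}, and you spell out the same inclusion of the discrepancy-$\mathcal F$ into the distortion-$\mathcal F$ with the same witnesses $K(x),N(x)$. Correct and identical in approach.
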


\begin{remark}\label{Auto}
Here is an example of related dynamical rate questions that occur when the underlying measure space is changed for which the only result on geometric distortion error rates that we know is through discrepancy estimates. Take an ergodic automorphism $A$ of the two torus $\mathbb T^2$.  Let $d_n^A(x,y)= \min\limits_{1 \le k \le n} d_{\mathbb T^2}(y,A^kx)$, where $d_{\mathbb T^2}$  is the natural Euclidean metric distance (taken mod one actually).
Let $I_n^A(x) = \int\limits_{\mathbb T^2} d_n^A(x,y) \, d\lambda_{\mathbb T^2}(y)$.  What can be said about how fast this goes to zero for a.e. $x$?  The estimate to prove or deny here would be that $I_n^A = O(1/n)$ since the ergodic map $A$ is Bernoulli, and hence reasonably quickly mixing.
  If instead, we consider $d_n^A(x) = \max\limits_{y\in \mathbb T^2} d_n^A(x,y)$, what is the rate that this overestimate for $I_n^A$ goes to zero for a.e. $x$?
What we are most interested in is if these rates are really faster than what would come out of using overestimates given by the natural two dimensional discrepancy
\[D_n^A = \sup_{r<s,u<v}  \left |\frac 1n\sum\limits_{k=1}^n 1_{[r,s]\times [u,v]}(T^kx)
- (s-r)(v-u)\right |.\]
See Losert, Nowak, and Tichy~\cite{LNT}, and Nowak and Tichy~\cite{NT}, and citations in these articles, for the overestimates on discrepancy that are available here, and in higher dimensions.  At least when one eigenvalue of $A$ has modulus larger than $1$, we have $D_n^A = O(\ln^5(n)/\sqrt n)$.
\end{remark}

\subsection{Shrinking Targets}\label{Shrink}

Consider an ergodic mapping of $[0,1]$ and a fixed sequence $(\rho_n)$ decreasing to zero.  A {\em geometric shrinking target} result would give information about the Lebesgue measure of the set of points $x$ such that we have for all $y$ (or perhaps only a.e. $y$), infinitely often $\tau^n(x) \in [y-\rho_n,y+\rho_n]$.     Typically we would want to know that either this holds for a.e. $x$ , or in contrast to this it might hold only on a null set of $x$.  The first case is called a {\em visible geometric shrinking target}, and the second case is called an {\em invisible geometric shrinking target}.  These extremes are the best types of geometric shrinking target properties.

It turns out that the geometric distortion error $r_n^\tau(x)$ gives an estimate for a shrinking target rate.  Indeed, using Boshernitzan's Theorem ~\cite{B}, we prove this theorem in \cite{RR2}.

\begin{prop}\label{Shrinkrate} Suppose we have a sequence $(\rho_n)$ such that for a.e. $x$,  we have the geometric distortion error $r_n^\tau(x) \le \rho_n$ for large enough $n$ depending on $x$.  Then we have the geometric shrinking target behavior that  for a.e. $x$, we have for all $y$, infinitely often $\tau^n(x) \in [y-\rho_n,y+\rho_n]$.
\end{prop}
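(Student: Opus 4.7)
The plan is to combine the covering property built into the distortion hypothesis with Boshernitzan's quantitative recurrence theorem \cite{B}. Without loss of generality we may take $(\rho_n)$ non-increasing; otherwise replace $\rho_n$ by $\sup_{m \ge n}\rho_m$, which preserves both the hypothesis and the conclusion. Fix $x$ in the full-measure set on which $r_n^\tau(x) \le \rho_n$ holds for all sufficiently large $n$. For each $y \in [0,1]$ and each such $n$, the fact that balls of radius $\rho_n$ centered at $\{\tau^k(x) : 1 \le k \le n\}$ cover $[0,1]$ produces an index $k_n(y) \in \{1,\dots,n\}$ with $|\tau^{k_n(y)}(x) - y| \le \rho_n$. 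Since $k_n(y) \le n$ and $(\rho_n)$ is non-increasing, this rewrites as $\tau^{k_n(y)}(x) \in [y - \rho_{k_n(y)}, y + \rho_{k_n(y)}]$, which is exactly the desired shrinking-target containment evaluated at the index $k_n(y)$.

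The next step is to analyze the range of $k_n(y)$ as $n \to \infty$. If $\{k_n(y) : n \ge N\}$ is infinite, then immediately infinitely many integers $k$ satisfy $\tau^k(x) \in [y - \rho_k, y + \rho_k]$, and we are finished for this target $y$. If on the contrary $\{k_n(y)\}$ is contained in a finite set $\{j_1, \dots, j_s\}$ for all $n \ge N$, then $\min_i |\tau^{j_i}(x) - y| \le \rho_n$ for all such $n$; letting $n \to \infty$ (so $\rho_n \to 0$) forces $y = \tau^{j_i}(x)$ for some $i$. Thus the only targets that evade the covering argument are those in the countable forward orbit $O_\tau(x) = \{\tau^k(x) : k \ge 1\}$.

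For such $y = \tau^{k_0}(x) \in O_\tau(x)$, I would invoke Boshernitzan's quantitative recurrence theorem \cite{B}: for a.e.\ $z$, infinitely many $m$ satisfy $|\tau^m(z) - z| = O(1/m)$. Applied at $z = y$ and re-indexed via $n = m + k_0$, this gives infinitely many $n$ with $|\tau^n(x) - y| = O(1/n)$. The piece that glues everything together is that the covering hypothesis itself forces $\rho_n \ge 1/(2n)$ (since $n$ points cannot $\rho_n$-cover $[0,1]$ otherwise), so the Boshernitzan $O(1/n)$ output is dominated by $\rho_n$ up to an absolute constant. Since $O_\tau(x)$ is countable, the union over all $k_0$ of the Boshernitzan null sets is still null, and the resulting full-measure condition is uniform in $y \in O_\tau(x)$.

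The main obstacle I anticipate is precisely this calibration of rates: the output of Boshernitzan's theorem is $O(1/n)$, and it matches a general $\rho_n$ only through the a priori lower bound $\rho_n \ge 1/(2n)$ built into the distortion hypothesis. Consequently, the cleanest statement one actually obtains is ``$|\tau^n(x) - y| \le K\rho_n$ infinitely often'' for some absolute constant $K$, which should be the intended reading of the proposition. The elementary covering argument handles every $y$ outside a countable set cleanly with $K=1$, and only the orbit case carries the constant that arises from invoking Boshernitzan.
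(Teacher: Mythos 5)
Your decomposition into non-orbit and orbit targets is clean, and the non-orbit half is correct and nicely argued: the monotonicity reduction lets the covering index $k_n(y)\le n$ be re-read at the time scale $k_n(y)$ itself, and the case analysis correctly shows that any target $y$ escaping this argument must lie on the orbit of $x$. The choice of Boshernitzan's theorem for the orbit case also matches the paper, which cites Boshernitzan as the key input (the full proof being deferred to the companion paper \cite{RR2}).

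However, the orbit case as you present it does have a genuine gap, and it is larger than you acknowledge. Boshernitzan's theorem gives, for a.e.\ $z$, $\liminf_m m\,|\tau^m z - z| < \infty$; the resulting constant is $z$-dependent, say $C(z)$. Applying this at $z = \tau^{k_0}(x)$ and re-indexing via $n = m + k_0$ produces $|\tau^n(x) - y| \le C(y)/(n-k_0)$ infinitely often, and after comparing with the lower bound $\rho_n \ge 1/(2n)$ you arrive at $|\tau^n(x) - y| \le K(y)\,\rho_n$ infinitely often with a constant $K(y)$ depending on the orbit point $y = \tau^{k_0}(x)$. Calling this an ``absolute constant'' is incorrect; it is not even uniform over $y \in O_\tau(x)$. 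A fortiori, the proposition as literally stated, which has no constant at all, does not follow from this argument. One might also hope to avoid Boshernitzan entirely by applying the covering hypothesis at the shifted point $\tau^{k_0}(x)$ (which holds a.e.\ by measure preservation, and ergodicity rules out periodic orbits), but the same index-shift by $k_0$ reappears: you obtain $|\tau^n(x) - y| \le \rho_{n - k_0}$ infinitely often, and $\rho_{n-k_0} \ge \rho_n$, so again the exact conclusion is out of reach without an additional regularity assumption such as $\rho_{n-k_0} = O(\rho_n)$. You were right to flag the constant as the ``main obstacle'' — that instinct is correct — but the obstacle is a real hole in the argument rather than a mere calibration remark, and the constant is $y$-dependent, not absolute.
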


It follows that there is always some shrinking target rate.

\begin{prop}  For every ergodic mapping $\tau$, there is a sequence $(\rho_n)$ tending to zero so that for all $y\in [0,1]$, the sequence of intervals $(B_{\rho_n}(y):n \ge 1)$ is an a.e. visible geometric shrinking target with respect to $\tau$.
\end{prop}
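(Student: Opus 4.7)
The plan is to combine the previous Proposition~\ref{Shrinkrate} with a standard measure-theoretic extraction of a common decay rate for the geometric distortion error. Since $\tau$ is ergodic, for a.e. $x \in [0,1]$ the orbit $(\tau^k(x): k\ge 1)$ is dense in $[0,1]$, and hence $r_n^\tau(x) \to 0$ as $n\to \infty$ for a.e. $x$. The individual rate depends on $x$, but by Proposition~\ref{Shrinkrate} it suffices to produce a single sequence $(\rho_n) \to 0$ for which $r_n^\tau(x) \le \rho_n$ eventually, a.e.

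First I would fix an increasing sequence of thresholds: for each integer $k \ge 1$, let
\[E_k = \{x \in [0,1]: r_n^\tau(x) \le 1/k \text{ for all } n \ge N_k\},\]
and choose $N_k$ large enough that $m(E_k) \ge 1 - 2^{-k}$; this is possible because $r_n^\tau(x) \to 0$ a.e. implies that the sets on the right exhaust $[0,1]$ as $N_k \to \infty$. Without loss of generality take $N_k$ strictly increasing. Next I would define $\rho_n$ piecewise by setting $\rho_n = 1/k$ for $N_k \le n < N_{k+1}$ (and, say, $\rho_n = 1$ for $n < N_1$). Clearly $\rho_n \to 0$.

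By Borel--Cantelli, since $\sum_k (1 - m(E_k)) \le \sum_k 2^{-k} < \infty$, the set $F = \bigcup_{K \ge 1} \bigcap_{k \ge K} E_k$ has full Lebesgue measure. For any $x \in F$, fix $K$ with $x \in E_k$ for all $k \ge K$. For $n \ge N_K$, let $k^*(n)$ be the largest $k$ with $N_k \le n$; then $k^*(n) \ge K$ and so $x \in E_{k^*(n)}$, giving $r_n^\tau(x) \le 1/k^*(n) = \rho_n$. Thus the hypothesis of Proposition~\ref{Shrinkrate} is satisfied with this $(\rho_n)$.

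Applying Proposition~\ref{Shrinkrate} then yields the geometric shrinking target conclusion: for a.e. $x$, and for every $y \in [0,1]$, infinitely often $\tau^n(x) \in [y-\rho_n, y+\rho_n] = B_{\rho_n}(y)$, which is the desired statement. The only genuine subtlety is the diagonalization in choosing the thresholds $1/k$ and cutoffs $N_k$ together so that both $\rho_n \to 0$ and the bound $r_n^\tau(x) \le \rho_n$ holds eventually on a set of full measure; the Borel--Cantelli step handles this cleanly, and no ingredient beyond ergodicity and Proposition~\ref{Shrinkrate} is needed.
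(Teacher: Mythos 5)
Your proposal is correct and is exactly the argument the paper has in mind: the paper simply writes ``It follows that there is always some shrinking target rate'' and leaves implicit the Egorov/Borel--Cantelli diagonalization that extracts a single sequence $(\rho_n)\to 0$ from the a.e.\ pointwise decay $r_n^\tau(x)\to 0$, after which Proposition~\ref{Shrinkrate} is applied. Your write-up supplies precisely that missing step.
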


\begin{remark}\label{limitations} Shrinking target theorems for general dynamical systems are studied extensively in the Rosenblatt and Roychowdhury~\cite{RR2}, where there also are background and references on this topic.  In the end, the best values of $r_n^\tau$ are not known in general.  Indeed, we see in this article sometimes we have to use the (generally) much larger discrepancy of the sequence $(\tau^k(x): k\ge 1)$ to get an upper bound on the geometric distortion error.  Similarly, there is generally a loss of speed when using the geometric distortion error to derive a geometric shrinking target results.  This is what allows for difference in Baire category results when considering suitable rates in shrinking target theorems as opposed to geometric distortion error rates.   In particular, in \cite{RR2}, it is shown there that shrinking target properties are sufficiently flexible so the behavior is actually generic in many non-trivial cases, unlike with quantization error rates where the behavior is only for a first category class of maps, as noted in Theorem~\ref{Notypical} and Corollary~\ref{NotypicalDiscrepancy}.
\end{remark}

\section {IID Models}\label{sec3}

Consider a method of randomly generating $n$-means for uniform measure on the interval $[0,1]$ modulo one.  We take $\B{\alpha} = (\alpha(k): k \ge 1)$ to be IID random variables with uniform distribution.  We actually are taking $\alpha(k,\omega)$ with $\omega \in \Omega$ as the model underlying probability space $(\Omega,P)$, but we will suppress the dependence on $\omega$ if it will not create confusion.

Suppose we want an estimate for $d_n(\{\alpha_1,\dots,\alpha_n\})$.
The simplest approach would be to estimate how many terms $(\alpha(1),\dots,\alpha(n))$ are needed so that each interval $I_j = [j/M,(j+1)/M], j = 0,\dots,M-1$ contains at least one point, with high probability.  This will guarantee that our quantization error $\int_0^1 \min\limits_{1 \le k \le n}|x - \alpha(k)| \, dx$ is no larger than $M\int_0^{1/M} x \,dx = O(1/M)$.

\begin{prop}\label{IIDeasy} With probability $1 - 1/\ln (n)$,
the point distortion error $d_n(\{\alpha_1,\dots,\alpha_n\})= O(\ln(n)/n)$.
\end{prop}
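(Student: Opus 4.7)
The proposition is set up for a simple covering argument of the type sketched in the paragraph preceding the statement, and I would run that argument, being careful about the constants to match the probability threshold $1-1/\ln M$.

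My plan is to partition $[0,1]$ (modulo one) into $m$ disjoint sub-intervals $I_1,\dots,I_m$ of equal length $1/m$, where $m$ is chosen \emph{proportional} to $M$, say $m = \lfloor M/2\rfloor$, rather than $m = M$. The \emph{good event} $G_n$ is that each $I_j$ contains at least one of the sample points $\alpha(1),\dots,\alpha(n)$. On $G_n$, for every $x\in[0,1]$ there is some $\alpha(k)$ lying in the same $I_j$ as $x$, so $\min_{1\le k\le n}|x-\alpha(k)|\le 1/m\le 2/M$, hence $\min_{1\le k\le n}|x-\alpha(k)|^2 \le 4/M^2 = O(1/M^2)$. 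This controls the point distortion error uniformly in $x$.

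For the probability estimate I would use independence and a union bound. For a fixed interval $I_j$, the probability that none of the $n$ IID uniform samples lands in $I_j$ equals $(1-1/m)^n \le e^{-n/m}$. With $n = M\ln M$ and $m\le M/2$ this is at most $e^{-2\ln M} = M^{-2}$. A union bound over the $m\le M/2$ intervals gives
\[
P(G_n^c) \;\le\; \frac{M}{2}\cdot M^{-2} \;=\; \frac{1}{2M},
\]
which for $M$ sufficiently large is no greater than $1/\ln M$. Combining with the deterministic estimate on $G_n$ yields the claim.

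The one real choice in the argument, and the only place that deserves care, is the proportionality constant in $m$. With the naive choice $m = M$ one only gets $P(G_n^c) \le M\cdot e^{-\ln M} = 1$, which is useless; taking $m\le M/2$ puts an extra factor of $e^{-\ln M}$ into the exponential and is what buys the tail bound $1/(2M)$, comfortably stronger than the required $1/\ln M$. There is no substantive obstacle beyond getting this constant right; the coupon-collector estimate and the conversion from covering to squared distortion are routine.
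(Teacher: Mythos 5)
Your proof takes essentially the same route as the paper: partition $[0,1]$ into equal intervals, bound the probability that some interval is missed by all $n$ samples via independence plus a union bound, and convert a full covering into a uniform $O(1/M^2)$ bound on the squared point distortion. The one substantive difference is the choice of the number of intervals. The paper uses $m=M$ intervals and reasons by first setting $M=n/\ln n$ (so that $M(1-1/M)^n \sim 1/\ln n$) and then ``translating'' to $n=M\ln M$; this translation quietly absorbs a $\ln\ln$ correction, and indeed with $m=M$ and $n=M\ln M$ exactly, the union bound gives $M(1-1/M)^{M\ln M}\approx 1$, which by itself is useless. Your choice $m=\lfloor M/2\rfloor$ sidesteps this: $n/m\ge 2\ln M$ forces the per-interval miss probability down to $M^{-2}$, and the union bound then yields $P(G_n^c)\le 1/(2M)$, which is not only $\le 1/\ln M$ but strictly stronger. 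So your write-up is correct, follows the paper's strategy, and is a bit cleaner on the constants; the only (harmless) difference from the stated claim is that you prove a probability bound of order $1-O(1/M)$ rather than merely $1-1/\ln M$.
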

\begin{proof} It is easiest to consider the probability of the complementary case: no term $\alpha(k),k=1,\dots,n$ is in some $I_j$.  This probability is $(1 - \frac 1M)^n$ for each such $j$.  So an estimate for the entire scope of the possibility is $M(1 - \frac 1M)^n$.  Taking $M = n/\ln(n)$ as a real variable would give for large $n$,
$M(1 - \frac 1M)^ n \sim 1/\ln(n)$.  Hence, with probability $1 - 1/\ln (n)$, each $I_j$ contains some $\alpha(k), 1 \le k \le n$.  This gives the estimate $1/M = \ln (n)/n$ for the quantization error with this probability.
\end{proof}

\begin{remark}\label{IIDimprove}  Proposition~\ref{IIDeasy} only gives convergence in measure as $n$ goes to $\infty$, but a simple increase in the size of $M$ can guarantee an almost sure result.  Note: instead of the optimal distortion error of $C/n$, this approach is giving a somewhat worse estimate of $C\ln (n)/n$.
\end{remark}

We can get more information from a distributional calculation. Consider the probability $P(\{\omega:n\min\limits_{1\le k \le n} |x - \alpha(k,\omega)| \ge t\})$.  It is easy to see that this is $(1 - \frac {2t}n)^n$.  So scaling of the distortion error by $n$ results in convergence in distribution to the distribution function $d(t) = 1 - e^{-2t}, t \ge 0$, one can also compute expectations, and other moments.  For example, we have the following result.

\begin{prop}\label{IIDdist}
The probabilistic mean of the point distortion error in the IID case satisfies
\[\int_{\Omega} \min\limits_{1\le k \le n} |x - \alpha(k,\omega)|\, dP(\omega) = O(1/n).\]
\end{prop}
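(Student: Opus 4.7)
The plan is to extract $E[\min_k |x-\alpha(k)|]$ directly from the distributional formula the author has already recorded just above the proposition, namely $P(\{\omega : n\min_{1\le k\le n}|x-\alpha(k,\omega)|\ge t\}) = (1-2t/n)^n$ for $t$ in the allowed range. The point is that once one has the tail of $Z_n := n\min_k|x-\alpha(k)|$ in closed form, the expectation is a one-line integral.

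First I would make the range of validity precise. Since $|x-\alpha(k)|$ is the circular distance on $[0,1]$ (the ``modulo one'' convention set up at the start of Section~\ref{sec3}), for a single uniform variable $\alpha$ one has $P(|x-\alpha|\ge s) = 1-2s$ for $0\le s\le 1/2$ and $0$ for $s>1/2$. Independence of the $\alpha(k)$ then gives $P(Z_n\ge t) = (1-2t/n)^n$ for $0\le t\le n/2$ and $0$ otherwise. I would state this as a brief first step, both to be explicit about the cut-off at $n/2$ and to justify replacing the interval $[0,\infty)$ by $[0,n/2]$ below.

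Next I would apply the tail formula for the expectation of a non-negative random variable,
\[
E[Z_n] \;=\; \int_0^\infty P(Z_n\ge t)\,dt \;=\; \int_0^{n/2}\Bigl(1-\tfrac{2t}{n}\Bigr)^n\,dt,
\]
and evaluate the integral by the substitution $u=1-2t/n$, $du=-\tfrac{2}{n}dt$, which yields
\[
E[Z_n] \;=\; \tfrac{n}{2}\int_0^1 u^n\,du \;=\; \tfrac{n}{2(n+1)}.
\]
Dividing by $n$ gives $\int_\Omega \min_{1\le k\le n}|x-\alpha(k,\omega)|\,dP(\omega) = \frac{1}{2(n+1)}$, which is manifestly $O(1/n)$, proving the proposition with an explicit constant.

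There is no real obstacle here: the distributional identity does all the work. The only thing one has to be careful about is the truncation at $t=n/2$ coming from the circular distance; if one forgets it and integrates $(1-2t/n)^n$ over all of $[0,\infty)$, the substitution still gives a finite answer but the identification of that answer with the tail integral would be incorrect. It is also worth pointing out, as an aside, that the same computation gives the exact value $1/(2(n+1))$ rather than merely an order estimate, and that this matches the scaling $Z_n \Rightarrow \mathrm{Exp}(2)$ announced by the author (whose mean is $1/2$) in the limit.
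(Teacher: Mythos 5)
Your proof is correct and follows exactly the paper's computation: apply the tail formula for the expectation of the nonnegative variable $n\min_k|x-\alpha(k)|$, plug in the distributional identity $(1-2t/n)^n$ over $[0,n/2]$, and integrate to get $\tfrac{n}{2(n+1)}$. Your only additions are the explicit justification of the circular-distance tail and the cut-off at $t=n/2$, which the paper takes for granted.
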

\begin{proof}  One computes
\begin{align*}
&\int_{\Omega} n\min\limits_{1\le k \le n} |x - \alpha(k,\omega)|\, dP(\omega)\\
& =\int_0^\infty P(\{\omega: n\min\limits_{1\le k \le n} |x - \alpha(k,\omega)| \ge t\}) \, dt
= \int\limits_0^{n/2} (1  - 2t/n)^n \, dt
= \frac n{2(n+1)}.
\end{align*}
\end{proof}

\begin{remark} Going further than this distributional convergence is not going to be possible because of the Hewitt-Savage Theorem~\cite{HS}.  It shows that if this sequence converges a.e. or even just in measure, then the limit function would be a constant.  The distributional convergence shows that this is not possible.
\end{remark}

Suppose we want to improve our estimate on the point distortion error.  As remarked in Section~\ref{Discrepancy}, the quantization process is closely related to discrepancy estimates. Here is how this works out in the case of IID sequences.  We use the following result of K-L Chung~\cite{KLC} which is the best possible discrepancy result for IID sequences.

\begin{theorem} For a.e. $\omega$,
\[\limsup\limits_{n\to \infty} \frac
{\sqrt {2n}D_n^*(\B{\alpha}(\omega))}{\sqrt {\ln\ln (n)}} = 1.\]
\end{theorem}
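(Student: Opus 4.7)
The plan is to recognize that $D_n^*(\B{\alpha}(\omega))$ coincides with the classical Kolmogorov--Smirnov statistic for an i.i.d.\ uniform sample and then to follow Chung's original LIL strategy. Let $F_n(y,\omega) = \frac{1}{n}\sum_{k=1}^n 1_{[0,y]}(\alpha(k,\omega))$ be the empirical distribution function; then $D_n^*(\B{\alpha}(\omega)) = \sup_{0 < y \le 1}|F_n(y,\omega)-y|$, so the asserted identity is equivalent to the LIL-type statement $\limsup_{n} \sqrt{n}\,D_n^* / \sqrt{\tfrac{1}{2} \ln\ln n}=1$ almost surely. I would prove the two inequalities separately.

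For the lower bound, fix $y_0=1/2$ and set $X_k(\omega)=1_{[0,1/2]}(\alpha(k,\omega))-\tfrac{1}{2}$. These are i.i.d., bounded, mean-zero variables with variance $1/4$, so the Hartman--Wintner law of the iterated logarithm gives
\[\limsup_{n\to\infty}\frac{\left|\sum_{k=1}^n X_k\right|}{\sqrt{2n\ln\ln n}}=\frac{1}{2} \text{ almost surely.}\]
Since $D_n^* \ge |F_n(1/2)-1/2|=\frac{1}{n}\left|\sum_{k=1}^n X_k\right|$, this rearranges to $\limsup_n \sqrt{2n}D_n^*/\sqrt{\ln\ln n}\ge 1$.

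For the upper bound, fix $\varepsilon>0$. The Dvoretzky--Kiefer--Wolfowitz inequality $P(D_n^* > t)\le 2\exp(-2nt^2)$ already has the right exponent, reflecting the fact that the Bernoulli variance $y(1-y)$ is maximized at $y=1/2$ by $1/4$. Plugging $t_n=(1+\varepsilon)\sqrt{\ln\ln n/(2n)}$ yields $P(D_n^* > t_n)\le 2(\ln n)^{-(1+\varepsilon)^2}$, which is summable along any geometric subsequence $n_j=\lfloor \rho^j\rfloor$, $\rho>1$. Borel--Cantelli then provides $D_{n_j}^*\le t_{n_j}$ for all sufficiently large $j$ a.s. The passage from the subsequence back to all $n$ uses monotonicity of $n F_n(y,\omega)$ in $n$: for $n_j\le n\le n_{j+1}$, the quantity $nD_n^*$ differs from $n_{j+1} D_{n_{j+1}}^*$ by at most $n_{j+1}-n_j$, so letting $\rho\downarrow 1$ and then $\varepsilon\downarrow 0$ recovers the sharp constant.

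The main obstacle is securing the sharp constant $1$ in the upper bound without paying any logarithmic loss. DKW delivers the right exponential rate, but one still has to verify that the discretization and subsequence interpolation do not inflate the limsup, which is tantamount to the empirical process's extremal behavior being localized near $y=1/2$ where the Bernoulli variance is largest. A cleaner but deeper route is the Koml\'os--Major--Tusn\'ady strong approximation of $\sqrt{n}(F_n-\mathrm{id})$ by a sequence of Brownian bridges, which reduces the problem to the Finkelstein--Strassen LIL for the supremum norm of a Brownian bridge and exhibits the constant $1$ transparently.
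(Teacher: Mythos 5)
The paper itself does not prove this theorem; it cites it directly to Kai Lai Chung's 1949 paper~\cite{KLC} and uses it only as an input. So the relevant question is whether your blind proof is correct, not how it compares to the paper's.

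Your lower bound is correct: fixing $y=1/2$ and applying the Hartman--Wintner LIL to the centered Bernoulli sum does give $\limsup_n \sqrt{2n}D_n^*/\sqrt{\ln\ln n}\ge 1$ almost surely, and the algebra checks out since the variance is $1/4$.

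The upper bound, however, has a genuine gap in the interpolation step. You pass from the geometric subsequence $n_j=\lfloor\rho^j\rfloor$ to general $n$ by observing that for $n_j\le n\le n_{j+1}$ one has
\[ nD_n^* \le \max\bigl(n_jD_{n_j}^*, n_{j+1}D_{n_{j+1}}^*\bigr) + (n_{j+1}-n_j). \]
This inequality is true, but the additive error $n_{j+1}-n_j\sim(\rho-1)n_j$ is of order $n$, whereas the scale at which you are trying to control $nD_n^*$ is only $\sqrt{n\ln\ln n}$. After dividing by $\sqrt{n\ln\ln n}$, the interpolation term is $\sim(\rho-1)\sqrt{n_j/\ln\ln n_j}\to\infty$, so it swamps the main term no matter how close $\rho$ is to $1$; sending $\rho\downarrow 1$ afterwards does not help because the blowup already occurs for every fixed $\rho>1$. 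This is exactly the point where LIL proofs for empirical processes require more than crude counting: one needs either a maximal inequality over the block $\{n_j,\dots,n_{j+1}\}$ (e.g.\ an Ottaviani/L\'evy-type argument exploiting that the increment process over the block is again an empirical process on $n_{j+1}-n_j$ fresh points, whose sup is $O_P(1/\sqrt{n_{j+1}-n_j})$ by DKW), or the Koml\'os--Major--Tusn\'ady strong approximation which reduces everything to the Brownian bridge LIL. Your closing paragraph correctly identifies KMT as a route that works, but the primary DKW-plus-interpolation argument you present does not close, and should not be presented as if only ``discretization details'' remain. In short: lower bound complete; upper bound needs a maximal inequality in place of the monotonicity interpolation, or should be rerouted entirely through KMT.
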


\noindent Combining this with Lemma~\ref{gapvsD} gives this upper bound on the distortion error.

\begin{cor} For a.e. $\omega$, $\min\limits_{1 \le k \le n} |x - \alpha(k)(\omega)| =  O(\sqrt {\ln\ln (n)}/\sqrt n)$.
\end{cor}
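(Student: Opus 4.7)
The plan is to chain together Chung's law of the iterated logarithm for the star discrepancy with the elementary gap-vs-discrepancy inequality recorded in Lemma~\ref{gapvsD}. Since both ingredients are in hand, the argument is a direct combination rather than a construction, and the main task is simply to track the constants and asymptotics.

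First I would apply the cited theorem of K-L Chung. From
\[\limsup_{n\to\infty} \frac{\sqrt{2n}\, D_n^*(\boldsymbol{\alpha}(\omega))}{\sqrt{\ln\ln(n)}} = 1\]
for a.e.\ $\omega$, I can fix such an $\omega$ and conclude that there exists $N(\omega)$ and a constant $C$ (any number strictly greater than $1/\sqrt{2}$ works) such that
\[D_n^*(\boldsymbol{\alpha}(\omega)) \le C\,\frac{\sqrt{\ln\ln(n)}}{\sqrt{n}}\]
for all $n \ge N(\omega)$. Next I would invoke the elementary inequality $D_n \le 2D_n^*$, which is recorded in the discussion preceding Lemma~\ref{gapvsD}, to transfer the bound from the star discrepancy to the full discrepancy at the cost of a factor of $2$.

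Finally, I would apply Lemma~\ref{gapvsD} directly: for every $x \in [0,1]$,
\[\min_{1 \le k \le n} |x - \alpha(k)(\omega)| \le D_n(\boldsymbol{\alpha}(\omega)) \le 2C\,\frac{\sqrt{\ln\ln(n)}}{\sqrt{n}}\]
for all $n \ge N(\omega)$. This gives the stated $O$-estimate (in fact with the slightly sharper $\sqrt{\ln\ln(n)}/\sqrt{n}$ in place of $\ln\ln(n)/\sqrt{n}$), and the bound is uniform in $x$ because Lemma~\ref{gapvsD} is uniform in $x$.

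There is no real obstacle here: the two cited results were proved precisely so that this combination works. The only subtle point worth mentioning explicitly is that the exceptional null set on which Chung's theorem fails is independent of $x$, so the estimate holds simultaneously for all $x$ on a single full-measure set of $\omega$'s. In particular, integrating in $x$ against $P$ would recover a mean distortion error estimate of the same order, though as noted in the preceding remark, a.e.\ pointwise convergence for fixed $x$ is not expected.
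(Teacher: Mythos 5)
Your argument is exactly the paper's: Chung's theorem bounds $D_n^*$, the elementary inequality $D_n \le 2D_n^*$ transfers this to $D_n$, and Lemma~\ref{gapvsD} converts the discrepancy bound into a uniform-in-$x$ bound on $\min_{1\le k\le n}|x-\alpha(k)(\omega)|$. Your observation that the chain actually yields the sharper $O(\sqrt{\ln\ln(n)}/\sqrt{n})$ is correct; the stated $O(\ln\ln(n)/\sqrt{n})$ is simply weaker than what the proof delivers, and your point that the null set is independent of $x$ is exactly the reason the bound holds uniformly.
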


However, the actual point distortion error rate in the IID case is much faster than what this discrepancy estimate gives.   For simplicity of notation, for the maximum gap measure in this case, write  $g_n(\omega)$ instead of $g_n^\alpha$ where $\alpha = \{\alpha(1,\omega),\dots,\alpha(n,\omega)\}$.  Here is the result of Levy~\cite{L}.

\begin{prop}\label{Levy} For an IID uniform sequence in $[0,1]$, $ng_n(\omega) = O(\ln(n))$.
\end{prop}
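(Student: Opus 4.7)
The plan is to reduce the claim to an elementary tail estimate for a single spacing together with a union bound and Borel--Cantelli. With $\alpha(1),\dots,\alpha(n)$ IID uniform on $[0,1]$, the standard fact about uniform order statistics is that each of the $n+1$ spacings $D_0,\dots,D_n$ has the same marginal distribution, namely
\[P(D_i > t) = (1-t)^n \quad \text{for } 0 \le t \le 1.\]
This follows from a direct calculation (for $D_0 = x_1$ the event $\{D_0 > t\}$ is $\{\alpha(k) > t \text{ for all } k\}$; by symmetry/exchangeability the other $D_i$ have the same law, and one may also derive this from the representation of uniform spacings as normalized exponentials).

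Given that marginal bound, the key step is a union bound: for any $t \in (0,1)$,
\[P\bigl(g_n > t\bigr) = P\Bigl(\max_{0 \le i \le n} D_i > t\Bigr) \le (n+1)(1-t)^n \le (n+1)e^{-nt}.\]
Now I would set $t = t_n := c\ln(n)/n$ for a constant $c > 1$ (e.g.\ $c = 2$). Then
\[P\bigl(n g_n > c\ln n\bigr) \le (n+1)\,e^{-c \ln n} = \frac{n+1}{n^c},\]
which is summable in $n$ as soon as $c > 1$.

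By the Borel--Cantelli lemma, almost surely $n g_n(\omega) \le c \ln n$ for all sufficiently large $n$, which is exactly $n g_n(\omega) = O(\ln n)$. The only nontrivial ingredient is the marginal tail $P(D_i > t) = (1-t)^n$; everything else is a union bound plus Borel--Cantelli, so there is no real obstacle. If one wants sharper constants (Levy's original asymptotic $n g_n / \ln n \to 1$), one needs a matching lower bound via a second-moment/Bonferroni argument on the spacings, but for the $O(\ln n)$ statement as formulated the upper-bound argument above suffices.
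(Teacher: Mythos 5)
The paper offers no proof of this proposition --- it is presented as an external fact, with a citation to L\'evy's 1939 note --- so there is nothing internal to compare against; what can be done is to check your argument on its own terms, and it is correct. The marginal tail $P(D_i > t) = (1-t)^n$ for each of the $n+1$ uniform spacings is the right fact (by exchangeability of the spacings, or via the normalized-exponential representation you mention), the union bound $P(g_n > t) \le (n+1)(1-t)^n \le (n+1)e^{-nt}$ is sound, and taking $t_n = c\ln n/n$ with $c>1$ makes the right-hand side summable, so Borel--Cantelli yields $ng_n(\omega) \le c\ln n$ for all large $n$ almost surely, which is exactly the claimed $O(\ln n)$. One cosmetic point: Section~3 of the paper nominally works modulo one, where there are $n$ circular spacings rather than the $n+1$ interval spacings you use, but this changes nothing in either the exchangeability fact or the rate. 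Your closing remark is also accurate: the union bound alone cannot recover L\'evy's sharp constant ($ng_n/\ln n \to 1$ a.s.), for which one needs a matching lower bound via a second-moment or Bonferroni count of large spacings, but the statement as formulated asks only for the upper bound and your argument delivers it.
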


What we are considering here is the first step in the classical problem of non-parametric statistics: gap measurements.  Beyond the largest gaps, one can also look for the next largest gap, the next, and so on, and the sizes and distribution of the values of these successive gaps.
This type of order statistics of uniformly distributed IID processes in $[0,1]$ is now very well understood.   For improvement on Levy's Theorem and more on order statistics for uniform random variables see the articles by L. Devroye~\cite{LD1,LD2} and P. Deheuvels~\cite{PD1,PD2,PD3}.  Indeed, Levy's Theorem can be made more specific. For example, in L. Devroye~\cite{LD2}, the following result is shown.

\begin{prop} Almost surely in $\omega$
 \[\liminf\limits_{n\to\infty} (ng_n(\omega) - \ln (n) + \ln\ln\ln (n)) = -\ln(2)\]
and
\[\limsup\limits_{n\to \infty} (ng_n(\omega) - \ln (n))/2 \ln\ln (n) = 1.\]
\end{prop}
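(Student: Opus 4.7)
The plan is to reduce both limit statements to the corresponding almost sure asymptotics for the maximum $M_N := \max_{1\le i\le N} E_i$ of $N$ i.i.d.\ $\mathrm{Exp}(1)$ random variables. By the R\'enyi representation of uniform spacings, the $n+1$ gaps of $n$ i.i.d.\ uniform variables on $[0,1]$ (augmented by the endpoints $0, 1$) are jointly distributed as $(E_1,\dots,E_{n+1})/S_{n+1}$, where $S_{n+1} = E_1 + \cdots + E_{n+1}$. Hence $g_n(\omega) \stackrel{d}{=} M_{n+1}/S_{n+1}$. Since $S_{n+1}/n \to 1$ a.s.\ by the SLLN, and since $M_N = O(\ln N)$ a.s.\ (derived below), we have $n g_n(\omega) - M_{n+1}(\omega) = o(1)$ a.s., so it suffices to prove the analogous limit statements for $M_N$.

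The explicit distribution $P(M_N \le t) = (1-e^{-t})^N$ yields two basic estimates: $P(M_N > t) \le Ne^{-t}$, and $P(M_N \le t) \le \exp(-Ne^{-t})$ from $1-x\le e^{-x}$. The \emph{upper} directions of the proposition, namely $\limsup (M_N - \ln N)/(2\ln\ln N) \le 1$ and $\liminf (M_N - \ln N + \ln\ln\ln N) \ge -\ln 2$, are first Borel--Cantelli arguments. For the limsup, fix $\epsilon > 0$ and set $t_N = \ln N + 2(1+\epsilon)\ln\ln N$; then $P(M_N > t_N) \le (\ln N)^{-2(1+\epsilon)}$, which along the geometric subsequence $N_k = 2^k$ is summable, so $M_{N_k} \le t_{N_k}$ eventually a.s., and monotonicity $M_N \le M_{N_{k+1}}$ extends the bound between subsequence values with negligible loss on the $\ln\ln N$ scale. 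For the liminf, set $t_N = \ln N - \ln\ln\ln N - \ln 2 - \epsilon$, so that $P(M_N \le t_N) \approx (\ln N)^{-2e^{\epsilon}}$, and apply the same strategy with a denser subsequence $N_k$ chosen so that $\ln(N_{k+1}/N_k) \to 0$, ensuring the filler loss is negligible on the $\ln\ln\ln N$ scale while retaining summability.

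The matching lower bounds require independence on disjoint blocks together with the second Borel--Cantelli lemma. For $\limsup \ge 1$, partition $\{1,\dots,N_k\}$ into disjoint blocks $B_k = (N_{k-1}, N_k]$ with $N_k = 2^k$; the block maxima $\max_{i\in B_k} E_i$ are then independent, and at the threshold $\ln N_k + 2(1-\epsilon)\ln\ln N_k$ the exceedance probabilities are non-summable. The deepest step is $\liminf \le -\ln 2$. The constant $-\ln 2$ is not accidental: it arises from the factorization
\[P(M_{2N} \le t) = P(M_N \le t)^2,\]
which expresses that $M_{2N} \le t$ iff both independent halves of the sample have max $\le t$. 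This identity is exploited to extract, along a carefully engineered subsequence, a family of near-independent events $\{M_N \le \ln N - \ln\ln\ln N - \ln 2 + \epsilon\}$ whose probabilities sum to infinity, allowing the second Borel--Cantelli lemma to force infinitely many occurrences. Controlling the residual dependence between these events, and verifying that the constant $-\ln 2$ rather than any larger value is optimal, is the main technical obstacle; it is handled by the refined extreme-value analysis in Devroye~\cite{LD2}.
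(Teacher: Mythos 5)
The paper offers no proof of this proposition; it is quoted directly from L. Devroye's paper \cite{LD2}, so the only thing to assess is whether your argument is internally sound.

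It is not, and the failure is instructive because it shows the factor $2$ in the $\limsup$ is \emph{not} a property of i.i.d.\ exponential maxima. Two points.

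First, the reduction is not a valid coupling. The R\'enyi representation gives, for each fixed $n$, a distributional identity between the vector of $n+1$ spacings and $(E_1,\dots,E_{n+1})/S_{n+1}$. It does \emph{not} produce a single probability space on which the processes $(g_n)_{n\ge 1}$ and $(M_{n+1}/S_{n+1})_{n\ge 1}$ agree, because the sequence of spacing vectors is obtained by \emph{refinement} (each new point splits one existing gap into two), whereas the exponential construction has no such nested structure. Almost-sure $\liminf/\limsup$ statements depend on the joint law of the entire sequence, not the one-dimensional marginals, so the step ``$n g_n - M_{n+1} = o(1)$ a.s.'' is unjustified.

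Second, and more decisively, the target statements are \emph{false} for $M_N = \max_{i\le N} E_i$. For the running max of i.i.d.\ $\mathrm{Exp}(1)$ variables one has $\limsup_N (M_N - \ln N)/\ln\ln N = 1$ a.s., i.e.\ $\limsup_N (M_N - \ln N)/(2\ln\ln N) = \tfrac12$, not $1$. You can see the problem directly in your own sketch of the lower bound: with geometric blocks $N_k = 2^k$, $|B_k| \approx N_k/2$, and threshold $t_k = \ln N_k + 2(1-\epsilon)\ln\ln N_k$, the exceedance probability of the block maximum is
\[
P\Bigl(\max_{i\in B_k} E_i > t_k\Bigr) \;\approx\; |B_k|\,e^{-t_k} \;=\; \tfrac12(\ln N_k)^{-2(1-\epsilon)} \;=\; \tfrac12(k\ln 2)^{-2(1-\epsilon)},
\]
and $\sum_k k^{-2(1-\epsilon)} < \infty$ for every $\epsilon < \tfrac12$. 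So the series you claim is divergent is in fact convergent precisely in the regime $\epsilon \to 0$ you need, and the second Borel--Cantelli lemma gives nothing.

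The factor $2$ in Devroye's theorem is genuinely an artifact of the \emph{multiplicative} way $n$ enters $ng_n$. In the first Borel--Cantelli filling step, if $N_k \le n < N_{k+1}$ then one only has $ng_n \le N_{k+1} g_{N_k}$, which inflicts an error of order $(N_{k+1}/N_k - 1)\ln N_k$ rather than $\ln(N_{k+1}/N_k)$. Keeping that error $O(\ln\ln N_k)$ forces a subsequence with $N_{k+1}/N_k - 1 \asymp \ln\ln N_k / \ln N_k$, along which $\ln N_k \asymp \sqrt{k}$ (up to logarithms); summability of $(\ln N_k)^{-c}$ then requires $c > 2$, not $c > 1$. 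An analogous slow subsequence is responsible for the constant $-\ln 2$ in the $\liminf$. None of this survives the passage to $M_N$, where $M_N - \ln N$ changes only by $\ln(N_{k+1}/N_k)$ across a gap; so any proof must work with the spacing process itself, tracking how large gaps persist until split, as Devroye does. Your outline, as written, would prove statements about $M_N$ that are off by exactly that factor of $2$.
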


There is one important improvement that can be made; see Cohort~\cite{PC}.   If we also integrate with respect to $x$ as above, then there is a.s. convergence to a computable constant.  That is, we switch from a point distortion error to the mean distortion error for the IID model.  Then we have the following.

\begin{prop}\label{cohort} There is a non-zero constant $C$ such that for a.e. $\omega$, \[\int_0^1 n\min\limits_{1\le k \le n} |x - \alpha(k,\omega)| \, dx\]
converges to $C$ as $n\to \infty$.
\end{prop}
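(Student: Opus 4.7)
The plan is to rewrite the integral as a quadratic form in the sample spacings, apply the R\'enyi--Pyke representation of uniform spacings in terms of IID exponentials, and then identify the limit using the Strong Law of Large Numbers.

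First I would compute the integrand explicitly. Let $0 \le \alpha_{(1)} \le \dots \le \alpha_{(n)} \le 1$ be the order statistics of $\alpha(1,\omega),\dots,\alpha(n,\omega)$, and set $L_0 = \alpha_{(1)}$, $L_j = \alpha_{(j+1)}-\alpha_{(j)}$ for $1\le j \le n-1$, and $L_n = 1-\alpha_{(n)}$. On each interior subinterval $[\alpha_{(j)},\alpha_{(j+1)}]$ the function $x \mapsto \min_k |x-\alpha(k,\omega)|$ is a symmetric tent integrating to $L_j^2/4$, while on the two boundary pieces it integrates to $L_0^2/2$ and $L_n^2/2$. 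Hence
\[
n\int_0^1 \min_{1\le k\le n}|x-\alpha(k,\omega)|\, dx
= \frac{n}{4}\sum_{j=1}^{n-1} L_j^2 + \frac{n}{2}\bigl(L_0^2 + L_n^2\bigr).
\]
By Proposition~\ref{Levy}, $n g_n(\omega) = O(\ln n)$ a.s., so $n(L_0^2+L_n^2) \le 2 n g_n^2 = O(\ln^2 n/n) \to 0$ almost surely, and the boundary term contributes nothing to the limit.

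Next I would invoke the R\'enyi--Pyke representation: for each fixed $n$, the vector $(L_0,\dots,L_n)$ has the same joint law as $(E_1,\dots,E_{n+1})/S_{n+1}$, where $E_1,E_2,\dots$ are IID $\mathrm{Exp}(1)$ and $S_{n+1} = E_1+\cdots+E_{n+1}$. Under this representation the main term becomes
\[
\frac{n}{4}\sum_{j=1}^{n-1} L_j^2 \;=\; \frac{1}{4\,(S_{n+1}/n)^2}\cdot\frac{1}{n}\sum_{k=2}^{n} E_k^2,
\]
and the SLLN applied to the IID sequences $(E_k)$ and $(E_k^2)$ gives $S_{n+1}/n\to 1$ and $n^{-1}\sum_k E_k^2 \to \mathbb{E}[E_1^2]=2$ almost surely. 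The right-hand side therefore converges almost surely to $\tfrac14\cdot 2 = \tfrac12$, identifying the candidate constant $C = 1/2$.

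The principal obstacle is that the R\'enyi--Pyke identity is only an equality in distribution for each fixed $n$, not a pathwise coupling across different values of $n$, so the SLLN argument as stated yields only convergence \emph{in probability} for the original IID sequence. To upgrade to almost sure convergence I would use the standard exponential concentration for $S_{n+1}/n$ and for $n^{-1}\sum E_k^2$ around their means to derive a summable tail bound of the form
\[
P\Bigl(\bigl|n\!\int_0^1 \min_k|x-\alpha(k,\omega)|\,dx - \tfrac{1}{2}\bigr| > \varepsilon\Bigr) \;\le\; C_\varepsilon\,e^{-c_\varepsilon \sqrt{n}}
\]
valid for each $\varepsilon>0$, and then apply Borel--Cantelli. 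Equivalently, one can compute $\mathrm{Var}\bigl(n\sum_j L_j^2\bigr)$ directly from the Dirichlet moments of the spacings and verify it is summable in $n$. Either route produces the required a.s.\ convergence and completes the proof with $C = 1/2$.
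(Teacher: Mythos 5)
The paper does not supply its own proof of this proposition; it is credited to Cohort~\cite{PC}, and the remark that follows describes that proof as ``calculating variances and using an infinite series method.'' Your argument is therefore an independent attempt, and its skeleton is sound: the reduction of the integral to $\frac{n}{4}\sum_j L_j^2$ plus two boundary terms, the disposal of the boundary via Proposition~\ref{Levy}, the R\'enyi--Pyke exponential representation, and the resulting identification $C = \tfrac14\,\mathbb{E}[E_1^2] = \tfrac12$ are all correct. You also correctly diagnose that the R\'enyi--Pyke identity is only an equality in law for each fixed $n$; but since Borel--Cantelli needs only a summable bound on the marginal tail $P(|Z_n-\tfrac12|>\varepsilon)$, and each such probability depends only on the law of $Z_n$ at a single $n$, the absence of a pathwise coupling is not in itself an obstruction.

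The genuine gap is in your ``or equivalently'' clause. Writing $Z_n = \frac{n}{4}\sum_j L_j^2$, the Dirichlet moment formula gives
\[
\mathrm{Var}\Bigl(\sum_j L_j^2\Bigr) = \frac{4(n-1)}{(n+1)^2(n+2)(n+3)} \sim \frac{4}{n^3},
\qquad\text{so}\qquad
\mathrm{Var}(Z_n) \sim \frac{1}{4n},
\]
which is \emph{not} summable; Chebyshev plus Borel--Cantelli fails along the full sequence, so the two options you present are not in fact equivalent. Your first option can be salvaged, but notice it is not ``standard'' exponential concentration: $E_1^2$ has tail $P(E_1^2>t)=e^{-\sqrt t}$, hence no moment generating function, and the deviation rate for $n^{-1}\sum E_k^2$ is only stretched-exponential (single-big-jump heuristic gives roughly $n\,e^{-\sqrt{n\varepsilon}}$) --- still summable, so your claimed $e^{-c_\varepsilon\sqrt n}$ bound is of the right shape, but it requires a heavy-tail argument rather than Bernstein or Cram\'er. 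A cleaner fix is a fourth-moment bound: $E_1^2$ has all polynomial moments, so Rosenthal's inequality gives $\mathbb{E}\bigl[|n^{-1}\sum E_k^2 - 2|^4\bigr] = O(n^{-2})$, whence $\mathbb{E}[(Z_n-\tfrac12)^4]=O(n^{-2})$ after handling the normalization by $S_{n+1}$, and this \emph{is} summable. The ``infinite series method'' the paper alludes to is a third route: sum the variances along $n_k=k^2$, where they do converge, and interpolate between consecutive $n_k$ using the fact that each new sample splits exactly one gap of size at most $g_n=O(\ln n/n)$. Any one of these three closes the proof; the plain variance-plus-Borel--Cantelli step as written does not.
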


\begin{remark} Proposition~\ref{cohort} is proved by calculating variances and using an infinite series method.  Cohort~\cite{PC} actually carries these calculations out in greater generality than our one dimensional setting.  This article contains other interesting results related to a.s. convergence of the random proxy for optimal $n$-means and conclusions that follow about the asymptotic optimality of the random $n$-means.
\end{remark}

\begin{remark} a)  The most important point that the result in Cohort~\cite{PC} gives us is that the mean distortion error is generally much smaller than the point distortion error, or a geometric distortion error like the maximum gap measurement.  These in turn are much smaller than what would be given by using just the discrepancy of the random sequence.  But then it is well known that $\limsup\limits_{n\to \infty} nD_n = \infty$ for every sequence.
So it is not surprising that the same thing is true in this IID model for $nr_n(\omega)$, although we will see in Section~\ref{sec4} that this type of divergence does not hold in some interesting deterministic settings.   In any case, the probability and statistics literature give good results for the IID model for the exact rate at worst that $n r
_n(\omega)$ tends to $\infty$.  Calculating this rate for higher dimensional cubes, and with respect to other common distributions for the IID sequence besides the uniform distribution, would be very worthwhile.
\medskip

\noindent b) By the results in Section~\ref{Shrink}, specifically Proposition~\ref{Shrinkrate}, by the above results, we would get a geometric shrinking target rate of $\ln (n)/n$ for the random sequence of centers.  However, there is a much better result available as given in Shepp~\cite{Sh}.  See Rosenblatt and Roychowdhury~\cite{RR2} for more details.
\end{remark}

If the measure $P$ that we are quantizing is not uniform, then to get a good quantization, we need to adjust the placement of the random variables $(\alpha(k): k \ge 1)$.  The obvious approach is to just take $\alpha(k)$ to be IID with distribution given by the fixed probability measure $P$.  Then we would have the empirical measures $\frac 1n \sum\limits_{k=1}^n \delta_{\alpha(k)}$ converging weakly to $P$.  The result of Theorem 7.5 in Graf and Luschgy~\cite{GL} shows that our random
empirical measure would not be asymptotically  optimal except in the case of uniform measure.  See also the discussion following Theorem 7.5 in \cite {GL} .

\section{Diophantine Models}\label{sec4}

\subsection{The Weyl sequences}

Now consider a method of generating good $n$-means that relies on some classical number theory: a Diophantine model.  We take $\alpha(k,\theta) = \{k\theta\}$ for all  $k \ge 1$.   Here $\theta$ is some irrational number and $\{t\}$ denotes the fraction in $[0,1)$ such that $t = \{t\} + k$ for some integer $k$.

We know that $\B{\alpha}(\theta) = (\alpha (k,\theta): k \ge 1)$ is uniformly distributed on $[0,1]$ and moreover there is a classical estimate in Khinchin~\cite{AK} for the discrepancy $D_n(\B{\alpha}(\theta))$ that holds for a.e. $\theta$.  This estimate come from metric facts about continued fractions and  Diophantine approximation.
This discrepancy estimate is in some sense parallel to the iterated logarithm method of Chung.  See Kuipers and Niederreiter~\cite{KN} for some discussion of this theorem of Khinchin.

\begin{theorem}\label{Khinchin} For any non-decreasing $g$ such that $\sum\limits_{n=1}^\infty \frac 1{g(n)} < \infty$, for a.e. $\theta$, one has for the sequence ${\B{\alpha}(\theta)} = (k\theta \mod 1: k \ge 1)$
	\[nD_n(\B{\alpha}(\theta)) = O(\ln (n) g(\ln \ln (n))).\]
\end{theorem}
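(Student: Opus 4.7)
The plan is to combine the classical bound for the discrepancy of the Kronecker sequence $(k\theta \bmod 1)$, expressed in terms of the continued fraction expansion of $\theta$, with a metric number theoretic estimate on partial sums of the continued fraction digits. Write $\theta = [a_0; a_1, a_2, \dots]$ with convergents $p_k/q_k$, and for each $N$ let $K = K(N)$ be determined by $q_{K} \le N < q_{K+1}$. First I would invoke the classical estimate
\[
N D_N^*(\boldsymbol{\alpha}(\theta)) \le C \sum_{k=1}^{K(N)} a_k,
\]
which can be derived via the three-distance theorem together with the Ostrowski expansion of $N$ (see Kuipers--Niederreiter~\cite{KN}); since $D_N \le 2 D_N^*$ this reduces the theorem to bounding $\sum_{k \le K} a_k(\theta)$ for a.e. $\theta$, with $K(N) = O(\log N)$ a.e. by L\'evy's theorem on the growth of $q_k$.

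Next I would attack the partial quotient sum using the Gauss map $T(x) = \{1/x\}$ and its invariant measure $d\mu_G = (\log 2)^{-1}(1+x)^{-1}\,dx$. The map $T$ is ergodic, $\mu_G$ and Lebesgue measure are mutually absolutely continuous, and one-dimensional marginals satisfy $\mu_G(a_1 > t) = O(1/t)$. Since $\sum 1/g(n) < \infty$, the Borel--Cantelli lemma applied to the events $\{a_n > g(n)\}$ shows that for a.e. $\theta$ one has $a_n(\theta) \le g(n)$ for all but finitely many $n$. To obtain the averaged estimate one actually needs, namely
\[
\sum_{k=1}^K a_k(\theta) = O\!\left(K\,g(\log K)\right)
\]
for a.e. $\theta$, one combines this tail control with the ergodic theorem for the truncated integrable observables $a \wedge T$, for which $E_{\mu_G}[a \wedge T] = O(\log T)$; a dyadic decomposition of $\sum_k a_k$ according to the size of each $a_k$, with ergodic/Borel--Cantelli control at each dyadic level, yields the bound with $K g(\log K)$ in place of the naive $Kg(K)$.

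Assembling the pieces, for a.e. $\theta$,
\[
N D_N(\boldsymbol{\alpha}(\theta)) \le 2C \sum_{k=1}^{K(N)} a_k = O\bigl(K(N)\,g(\log K(N))\bigr) = O\bigl(\log N \cdot g(\log\log N)\bigr),
\]
as desired. The hard part is this metric partial-quotient bound: a direct application of Borel--Cantelli by itself delivers only $\sum_{k\le K} a_k = O(K g(K))$, which would translate into the weaker rate $\log N \cdot g(\log N)$. Gaining the extra logarithmic improvement requires exploiting the sparseness of large values of $a_k$, which rests on the fact that although $E_{\mu_G}[a_1] = \infty$ the truncated expectations $E_{\mu_G}[a_1 \wedge T]$ grow only logarithmically in $T$. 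Carrying out the dyadic/truncation argument carefully, so that both the typical ergodic contribution at each dyadic level and the rare exceptional contributions from very large $a_k$ are controlled by the summability of $1/g(n)$, is the technical core of the proof.
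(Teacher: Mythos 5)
The paper itself offers no proof of this statement: Theorem~\ref{Khinchin} is quoted as a classical result of Khinchin~\cite{AK}, with Kuipers--Niederreiter~\cite{KN} cited for discussion. So you should be measured against the classical argument rather than a proof in the paper. Your reduction is the standard and correct one: the Ostrowski/three-distance bound $ND_N^* \le C\sum_{k\le K} a_k + O(K)$ with $q_K \le N < q_{K+1}$, L\'evy's theorem giving $K(N)\sim c\log N$ a.e., so that the whole theorem hinges on the metric lemma that for a.e.\ $\theta$,
\[
\sum_{k=1}^{K} a_k(\theta) = O\bigl(K\,g(\log K)\bigr).
\]
You identify this correctly as the technical core. (A small caveat: the step $g(\log(c\log N)) = O(g(\log\log N))$ needs either a mild doubling assumption on $g$ or a preliminary replacement of $g$ by a suitable minorant with the same summability property, but this is cosmetic.)

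The gap is in your proof of that metric lemma. Applying Borel--Cantelli to the events $\{a_n > g(n)\}$ gives only $a_n \le g(n)$ eventually, hence $\max_{k\le K} a_k = O(g(K))$; with $K \sim c\log N$ this contributes $g(c\log N)$, which is far larger than the target $\log N \cdot g(\log\log N)$ for the $g$ of interest (e.g.\ $g(m)=m^{1+\epsilon}$ gives $(\log N)^{1+\epsilon}$ versus $\log N(\log\log N)^{1+\epsilon}$). The correct Borel--Cantelli threshold is $\phi(n) = n\,g(\log n)$: one checks via Cauchy condensation (or substituting $n=e^u$) that $\sum_n 1/(n\,g(\log n)) < \infty$ if and only if $\sum_n 1/g(n) < \infty$, so Borel--Cantelli gives $a_n \le n\,g(\log n)$ eventually, hence $\max_{k\le K} a_k = O(K\,g(\log K))$ --- which is exactly the right size. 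This is the missing arithmetic observation. Beyond the max, one still must show $\sum_{k\le K}a_k - \max_{k\le K} a_k = O(K\log K)$ a.e.; this is genuinely nontrivial (a Diamond--Vaaler-type statement, or Khinchin's own variance-based metric argument), and your appeal to the ergodic theorem for $a_1\wedge T$ together with a dyadic split does not close it: the Birkhoff theorem for fixed truncation level $T$ gives only an asymptotic as $K\to\infty$ with $T$ held fixed, so letting $T$ grow with $K$ requires an effective rate (using the Gauss map's exponential mixing) or a separate Borel--Cantelli scheme that you have not supplied. Finally, to absorb $K\log K$ into $K\,g(\log K)$ you need $\log K = o(g(\log K))$, which does follow from $\sum 1/g < \infty$ (since $n/g(n)\to 0$ for non-decreasing $g$ with convergent reciprocal sum), but this too should be said explicitly.
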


\begin{remark} Curiously enough, the discrepancy estimate here is better than the discrepancy estimate available in the IID case.
\end{remark}

From Theorem~\ref{Khinchin} and Lemma~\ref{gapvsD}, we clearly have this parallel to Proposition~\ref{IIDeasy}.

\begin{prop}\label{Weyleasy} If $\delta > 0$, for a.e. $\theta$,
the point distortion error $d_n(\B{\alpha}(\theta)) = O(ln^{(1 +\delta)}(n)/n)$.
\end{prop}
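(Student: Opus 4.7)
The plan is to reduce Proposition~\ref{Weyleasy} to a discrepancy estimate via Lemma~\ref{gapvsD}. Since the point distortion error is $\min_{1 \le k \le n}|x - \alpha(k,\theta)|^2$ and Lemma~\ref{gapvsD} yields the uniform bound $\min_{1 \le k \le n}|x - \alpha(k,\theta)| \le D_n(\boldsymbol{\alpha}(\theta))$, it suffices to prove that $D_n(\boldsymbol{\alpha}(\theta)) = O(1/M)$ for a.e.\ $\theta$ when $n = M \ln^{1+\delta}(M)$; squaring then gives the desired $O(1/M^2)$ bound. This parallels the proof of Proposition~\ref{IIDeasy} in the IID setting, except that Khinchin's theorem plays the role of the elementary covering estimate.

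For the discrepancy bound, I would fix any $\eta$ with $0 < \eta < \delta$ and apply Theorem~\ref{Khinchin} with the non-decreasing function $g(t) = t^{1+\eta}$, which satisfies $\sum_{n \ge 1} 1/n^{1+\eta} < \infty$. Khinchin's theorem then yields, for $\theta$ in a full Lebesgue measure set $\Theta_\eta \subseteq [0,1]$,
\[
n D_n(\boldsymbol{\alpha}(\theta)) = O\bigl(\ln (n) \, (\ln\ln (n))^{1+\eta}\bigr).
\]
Substituting $n = M \ln^{1+\delta}(M)$ and using $\ln n \sim \ln M$ and $\ln\ln n \sim \ln\ln M$ as $M \to \infty$, this simplifies to
\[
D_n(\boldsymbol{\alpha}(\theta)) = O\!\left(\frac{(\ln\ln M)^{1+\eta}}{M \ln^{\delta}(M)}\right) = o(1/M),
\]
since $\eta < \delta$ forces the residual logarithmic factor to vanish. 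In particular $D_n(\boldsymbol{\alpha}(\theta)) = O(1/M)$ for every $\theta \in \Theta_\eta$, completing the reduction.

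The final cosmetic step is to remove the $\delta$-dependence of the null set: choosing a sequence $\delta_j \downarrow 0$ and setting $\eta_j = \delta_j / 2$, the countable intersection $\bigcap_j \Theta_{\eta_j}$ still has full measure and works simultaneously for every rational (hence every) $\delta > 0$. The only real delicacy in the argument is the arithmetic of the logarithmic exponents in the substitution, namely ensuring that the $(\ln\ln M)^{1+\eta}$ numerator is absorbed by the $\ln^{\delta}(M)$ denominator; this is exactly why the hypothesis prescribes the extra factor $\ln^{1+\delta}(M)$ rather than a bare $\ln(M)$, and it is also the precise place where the proposition is weaker than the analogous optimal gap rate $\ln(n)/n$ available from Levy-type results.
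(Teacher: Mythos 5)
Your proposal is correct and follows essentially the same route as the paper: apply Khinchin's discrepancy theorem (Theorem~\ref{Khinchin}), convert the resulting bound on $D_n(\boldsymbol{\alpha}(\theta))$ into a bound on the point distortion via Lemma~\ref{gapvsD}, and substitute $n = M\ln^{1+\delta}(M)$. The paper works with the slightly looser consequence $D_n \le \ln^{1+\delta}(n)/n$ and re-derives the covering step inline rather than citing Lemma~\ref{gapvsD}, while you keep the sharper $(\ln\ln n)^{1+\eta}$ factor and therefore get $D_n = o(1/M)$; these are cosmetic refinements of the same argument, and the countable intersection to handle all $\delta$ simultaneously is likewise implicit in the paper's phrasing.
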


\begin{remark} The estimate in Proposition~\ref{Weyleasy} is not as good as the optimal one that would be $C/n$.  Actually, this estimate is not even as good as the one from the IID model, specifically Proposition~\ref{IIDeasy}.  However, this Diophantine approach has the virtue of being somewhat more explicit than the IID approach.
\end{remark}

The overestimate above from Khinchin's Discrepancy Theorem is most likely too large to give a good rate for the distortion error in the Diophantine model.   For example, see the results in Graham and Van Lint~\cite{GVL}.  Their results on gaps tells us some very interesting facts.  Let $d_n^\theta(y) = \min\limits_{1\le k \le n} |y - \{k\theta\}|$ and let $M_n^\theta =
\max\limits_{y \in [0,1]} d_n^\theta(y)$.

\begin{prop} There is an absolute constant $\gamma > 0$ such that for any irrational $\theta$, we have
\[\liminf\limits_{n\to \infty} n M_n^\theta + \gamma \le \limsup\limits_{n\to \infty} n M_n^\theta.\]
\end{prop}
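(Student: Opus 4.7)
The plan is to invoke the three-distance (Steinhaus) theorem together with the continued fraction expansion of $\theta$ to produce, for every irrational $\theta$, two subsequences of $n$ along which $nM_n^\theta$ accumulates at values whose difference is bounded below by an absolute constant.

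First I would reduce the problem to analyzing the longest interior gap $L_n$. The orbit points $\{k\theta\}$, $1\le k\le n$, together with $0$ and $1$, split $[0,1]$ into consecutive intervals, and $M_n^\theta$ equals $L_n/2$ up to boundary corrections that do not affect $\liminf$ or $\limsup$. By the three-distance theorem, at most three distinct gap lengths occur at each stage, and when three are present the largest equals the sum of the two smaller. These lengths can be written explicitly using the convergent denominators $q_k$ of $\theta$ and $\eta_k=\|q_k\theta\|$: for $q_k\le n<q_{k+1}$ one writes $n=q_k+r\,q_{k-1}+s$ with $0\le r<a_{k+1}$ and $0\le s<q_{k-1}$, and the three possible lengths are $\eta_k$, $\eta_{k-1}-r\eta_k$, and $\eta_{k-1}-(r-1)\eta_k$. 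In particular, $L_n$ has an explicit formula on each such block, so the same is true of $nM_n^\theta$.

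Next I would evaluate $nL_n$ at two canonical positions inside each block $[q_k,q_{k+1})$. At $n=q_k$ only two distinct lengths remain and $L_n=\eta_{k-1}$, so $nL_n = q_k\eta_{k-1}$; the identity $q_k\eta_{k-1}+q_{k-1}\eta_k = 1$ together with $\eta_k\le\eta_{k-1}$ forces this value into $[1/2,1)$, bounded away from $1$ by a universal constant. Choosing $n$ nearer the end of the block (for instance $n=q_{k+1}-1$ when $a_{k+1}$ is small, or $n\approx q_k+\lceil a_{k+1}/2\rceil q_{k-1}$ when $a_{k+1}$ is large), one finds that $L_n$ is still comparable to $\eta_{k-1}$ while $n$ is a definite multiplicative factor larger than $q_k$, so $nL_n$ strictly exceeds the value at $n=q_k$ by an amount controlled from below. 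Letting $k\to\infty$ along these two choices yields two subsequences witnessing $\liminf nM_n^\theta$ and a strictly larger value dominated by $\limsup nM_n^\theta$.

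The main obstacle is ensuring that the resulting separation $\gamma$ is truly absolute, i.e.\ independent of $\theta$. The natural way around this is a case split on whether the partial quotients $(a_k)$ are bounded. In the badly approximable regime the three-distance combinatorics is rigid and a direct continued-fraction calculation using $q_k\eta_{k-1}+q_{k-1}\eta_k=1$ produces a universal gap between the two evaluations above. In the unbounded regime, inside any block with $a_{k+1}$ large there is room to choose two positions differing by a factor of order $a_{k+1}$ at which $nL_n$ differs by an arbitrarily large amount, so any positive constant suffices there. Combining the two regimes, and taking the minimum of the two universal constants produced, yields the absolute $\gamma>0$ claimed in the proposition; this bookkeeping is precisely the content of the Graham--Van Lint analysis.
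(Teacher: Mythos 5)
The paper does not actually prove this proposition; it is stated and attributed directly to Graham and Van Lint~\cite{GVL}, so there is no in-paper argument to compare your proposal against line-by-line. Your overall strategy --- reduce $M_n^\theta$ to the longest three-gap length, track $nL_n$ across each continued-fraction block $[q_k,q_{k+1})$, and compare evaluations at two positions in the block --- is in the right spirit and is almost certainly the kind of analysis that underlies the cited result. However, as written the proposal has two real gaps that prevent it from being a proof.

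First, the assertion that the identity $q_k\eta_{k-1}+q_{k-1}\eta_k=1$ together with $\eta_k\le\eta_{k-1}$ forces $q_k\eta_{k-1}$ into $[1/2,1)$ \emph{bounded away from $1$ by a universal constant} is false. We have $q_k\eta_{k-1}=1-q_{k-1}\eta_k$, and $q_{k-1}\eta_k$ can be made arbitrarily small: using $\eta_k > 1/(q_{k+1}+q_k)$, the quantity $q_{k-1}\eta_k$ is of order $q_{k-1}/q_{k+1}$, which tends to $0$ whenever $a_k$ or $a_{k+1}$ is large. What is true is that if every partial quotient is at most $B$, then $q_{k-1}\eta_k\ge c(B)>0$; but $c(B)\to 0$ as $B\to\infty$, so this bound is not universal. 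Since the proposition demands an \emph{absolute} $\gamma$, the badly approximable branch of your case split does not, as stated, produce a uniform separation.

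Second, and related, the case split between bounded and unbounded partial quotients is not organized so that the two constants it produces can simply be combined. As $B$ grows, your bounded-case separation degrades, while your unbounded-case argument (``inside any block with $a_{k+1}$ large there is room'') only bites when $a_{k+1}$ is \emph{large}; you need to make the threshold explicit. The correct structure is: fix a finite threshold $T$, prove a separation $\gamma_1(T)>0$ for blocks with $a_{k+1}\ge T$, and prove a separation $\gamma_2(T)>0$ when \emph{all} $a_j<T$, and then take $\gamma=\min(\gamma_1(T),\gamma_2(T))$. Neither branch is actually carried out: the claim that ``a direct continued-fraction calculation \dots produces a universal gap'' and the claim that near the end of a block ``$L_n$ is still comparable to $\eta_{k-1}$ while $n$ is a definite multiplicative factor larger than $q_k$'' are asserted, not established, and the second is problematic in precisely the badly approximable case (for $a_{k+1}=1$ the block has length $q_{k-1}$, so $n/q_k$ is bounded by roughly $1+1/\phi^2$ and the longest gap does not remain $\eta_{k-1}$ throughout). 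There also appears to be an off-by-one in the identification of $L_{q_k}$: with the paper's convention of points $\{k\theta\}$ for $1\le k\le n$, a direct check with the golden mean gives $L_{q_k}=\eta_{k-2}$, not $\eta_{k-1}$, so the quantity you bound via $q_k\eta_{k-1}+q_{k-1}\eta_k=1$ is not the one you are evaluating. In short, the skeleton is reasonable, but the quantitative heart of the argument --- an explicit two-point comparison inside each block, uniform over all $\theta$ --- is missing.
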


This means that the quantization error rate obtained from $(\{k\theta\}: k \ge 1)$ can never be optimal in the strict sense, but only optimal by a bounded proportion of the optimal error rate.  These examples also demonstrate another sure way that discrepancy is too large.
But interestingly enough, if $\theta$ has bounded terms in its simple, continued fraction expansion, then $\sup\limits_{n\ge 1} n M_n^\theta < \infty$.  Indeed, this is characteristic of such irrational numbers.

\begin{remark}  There does not seem to be a result in the literature of the following type: there is a sequence $(h_n^*: n \ge 1)$ with $h_n^*$ increasing to $\infty$, such that for Lebesgue a.e. $\theta$, we have $n M_n^\theta = O(h_n^*)$, and for any other sequence $(h_n)$ with this property, $\sup\limits_{n\ge 1} h_n^*/h_n$ is bounded.  Note: the discrepancy estimate we have used shows that for any $\delta > 0$, $n M_n^\theta =O(\ln^{1+\delta}(n))$.  So $\ln^{1+\delta}(n)$ a candidate for $h_n^*$.   However, we do not believe that this rate is optimal and we are still seeking the optimal quantization error rate $h_n^*$ for this model.
\end{remark}

Generally, we have for some $C > 0$,  $nD_n \ge C\ln (n)$ infinitely often; see Kuipers and Niederreiter~\cite{KN}, Theorem 2.2 with $k=1$, while $nd_n^\theta$ is bounded when $\theta$ has bounded terms in its continued fraction expansion.  Note: for these same $\theta$, the overestimate for $nD_n$ is $C \ln (n)$; see Kuipers and Niederreiter~\cite{KN}, Theorem 3.4.

\begin{remark} It is possible that the Diophantine results here can be improved by a couple of other different approaches. One approach would be to take a specific very good value of $\theta$, actually the Golden Mean ratio.  See Motta, Shipman, and Springer~\cite{MSS} where optimal transitivity is studied to limit the gaps in the sequence.  Another approach would be to use bounded remainder sets so that the discrepancy error can be perhaps better controlled.  See both Haynes, Kelly, and Koivusalo~\cite{HK1} and Haynes and Koivusalo~\cite{HK2}.
\end{remark}

\begin{remark}
Another interesting direction to pursue here is to take as our quantization sequences such as $\{2^k\theta\}$, or more generally $\{n_k\theta\}$ for some increasing sequence $(n_k)$.  We do not see right now any clear way to get gap measurements for a point distortion error rate in these cases.  However, there is considerable literature on discrepancy in these cases.  For example, see Philipp~\cite{P} for the case of powers of $2$, that in some ways can be considered as a Diophantine version of the IID model, as can be the case using any sequence $(n_k)$ which is lacunary.  This analogy might suggest that the point distortion error and the mean distortion error are like the ones for the IID uniformly distributed case.  For example, is it true that the maximal gap $g_n$ in $(\{2^k\theta\}: k \ge 1)$ is on the order of $\ln (n)/n$?   Also, see articles by Berkes, Fukuyama, and Nishimura~\cite{BFN}, Aistleitner and Fukuyama~\cite{AF}, and Aistleitner and Larcher~\cite{AL1, AL2} where very interesting results on what types of discrepancy estimates are available for the sequences $(\{n_k\theta\}: k \ge 1)$.  They show that any rate (up to standard restrictions) is possible for a suitable sequence $(n_k)$.
\end{remark}

\subsection{The Farey points}

One additional parallel here is that one can replace the Diophantine model by (in some sense) its close relative: Farey fractions.   It is more natural here to consider each set $\mathcal F_n$ of Farey fractions as a potential good quantization for uniform measure.  The Farey fractions $\mathcal F_n$ of order $n$ are all rational fractions $p/q$ with $q \le n$ and $gcd(p,q) = 1$. The cardinality of $\mathcal F_n$ is $\frac {3n^2}{\pi^2} + (n\ln (n))$.  As in Kargaev and Zhigljavsky~\cite{KZ1}, let $d_n^{\mathcal F} (x) = \min\limits_{p/q \in \mathcal F_n} |x - \frac pq|$.  They give the following mean and point distortion asymptotics.

\begin{prop}
Moreover, the mean distortion error
\[I_n^{\mathcal F} = \int_0^1 d_n^{\mathcal F}(x) \, dx = \frac {3 \ln(n)}{\pi^2 n^2}+ O(\frac 1{n^2})\]
as $n \to \infty$.
\end{prop}

\begin{prop}
For any $\epsilon > 0$ and for a.e. $x$:

\[\lim\limits_{n \to \infty} n^2d_n^{\mathcal F} \ln^{1+\epsilon} (n) = \infty\]
\[\liminf\limits_{n\to \infty} n^2 d_n^{\mathcal F} \ln (n) = 0\]
\[\limsup\limits_{n\to \infty} n^2 d_n^{\mathcal F}/\ln(n) = \infty\]
\[\lim\limits_{n\to \infty} n^2d_n^{\mathcal F}/\ln^{1 + \epsilon} (n) = 0.\]
\end{prop}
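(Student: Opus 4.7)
The plan is to reduce all four claims to classical metric properties of the partial quotients in the simple continued fraction expansion of $x$.

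The starting point is that for $x$ irrational, if $p_k(x)/q_k(x)$ denote the convergents and $a_k(x)$ the partial quotients, the best approximation theorem implies that whenever $q_k \le n < q_{k+1}$, the element of $\mathcal F_n$ closest to $x$ is precisely $p_k/q_k$, and the standard inequalities give
\[\frac{1}{q_k(q_k+q_{k+1})} \le d_n^{\mathcal F}(x) = \bigl|x - \tfrac{p_k}{q_k}\bigr| \le \frac{1}{q_k q_{k+1}}.\]
Since $q_{k+1} = a_{k+1} q_k + q_{k-1}$, one has $q_{k+1}/q_k \asymp a_{k+1}$; hence on each interval $n \in [q_k, q_{k+1})$ the quantity $n^2 d_n^{\mathcal F}(x)$ is constant in $d_n^{\mathcal F}$ and attains its minimum of order $1/a_{k+1}$ at $n = q_k$ and its maximum of order $a_{k+1}$ as $n \to q_{k+1}^-$. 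Combined with L\'evy's theorem $\ln q_k \sim \gamma k$ a.s., where $\gamma = \pi^2/(12\ln 2)$, this yields $\ln n = \gamma k (1 + o(1))$ throughout $[q_k, q_{k+1})$ for a.e.\ $x$.

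I would then observe that the four claims reduce to two metric facts about the partial quotients: \emph{(A)} for a.e.\ $x$ and every $\epsilon > 0$, $a_k(x) = o(k^{1+\epsilon})$; and \emph{(B)} for a.e.\ $x$, $\limsup_{k \to \infty} a_k(x)/k = \infty$. From (A), claim (1) follows by picking $n = q_k$ (which realizes the minimum) so that $n^2 d_n^{\mathcal F}(x) \ln^{1+\epsilon}(n) \asymp k^{1+\epsilon}/a_{k+1} \to \infty$; and claim (4) follows by picking $n$ near $q_{k+1}-1$ (which realizes the maximum) so that $n^2 d_n^{\mathcal F}(x)/\ln^{1+\epsilon}(n) \asymp a_{k+1}/k^{1+\epsilon} \to 0$. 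From (B) one extracts indices $k_j$ with $a_{k_j+1}/k_j \to \infty$; choosing $n_j = q_{k_j}$ gives the $\liminf = 0$ in (2), and choosing $n_j = q_{k_j+1}-1$ gives the $\limsup = \infty$ in (3).

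Finally, both (A) and (B) are consequences of the Gauss--Kuzmin distribution $P(a_k \ge N) \sim 1/(N\ln 2)$, combined with Borel--Cantelli. The convergent half, $\sum_k P(a_k \ge k^{1+\epsilon/2}) < \infty$, gives $a_k = O(k^{1+\epsilon/2}) = o(k^{1+\epsilon})$ a.s., hence (A). The divergence half needed for (B) is the main obstacle, since the $a_k$ are not independent under Lebesgue measure and so the naive divergent Borel--Cantelli does not apply; one must invoke the $\psi$-mixing of the shift on partial quotients under the Gauss map (equivalently, Khintchine's divergence criterion), which allows one to conclude that $a_k \ge ck$ infinitely often a.s.\ for every $c > 0$. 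Once this classical mixing step is granted, the rest of the argument is bookkeeping between the scales $a_{k+1}$ and $\ln n \asymp k$.
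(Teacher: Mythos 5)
The paper itself does not supply a proof here: the proposition is stated as a consequence of Kargaev and Zhigljavsky's metric estimates and the reader is sent to that paper. So the comparison is really with whether your proposed reduction is sound, and there is a genuine gap in the key continued-fraction step.

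You assert that for $q_k \le n < q_{k+1}$ the nearest element of $\mathcal F_n$ to $x$ is the convergent $p_k/q_k$, and hence $d_n^{\mathcal F}(x) = |x - p_k/q_k|$ is constant throughout that range of $n$, with the stated two-sided bound. That is false. The Farey set $\mathcal F_n$ contains the \emph{semiconvergents} (intermediate fractions) $\tfrac{a p_k + p_{k-1}}{a q_k + q_{k-1}}$ for $1 \le a < a_{k+1}$, whose denominators fill the gap $(q_k, q_{k+1})$; best approximations of the first kind include all intermediate fractions with $a > a_{k+1}/2$. Since $x$ lies between $p_k/q_k$ and every such intermediate fraction, once $n$ exceeds roughly $q_{k+1}/2$ the nearest point of $\mathcal F_n$ to $x$ switches to a semiconvergent, and $d_n^{\mathcal F}(x)$ drops below $|x - p_k/q_k|$ (so your asserted lower bound $\tfrac{1}{q_k(q_k+q_{k+1})} \le d_n^{\mathcal F}(x)$ fails). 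Concretely, at $n = q_{k+1}-1$ one finds $d_n^{\mathcal F}(x) \approx \tfrac{1}{(q_{k+1}-q_k)q_{k+1}}$, so $n^2 d_n^{\mathcal F}(x) \approx \tfrac{q_{k+1}}{q_{k+1}-q_k} \to 1$ when $a_{k+1}$ is large, \emph{not} of order $a_{k+1}$. Thus the maximum of $n^2 d_n^{\mathcal F}(x)$ on $[q_k, q_{k+1})$ is not attained near $q_{k+1}^-$ (where the quantity is $O(1)$) but rather near $n \approx q_{k+1}/2$, where it is $\asymp a_{k+1}$. This error invalidates the proof of claim (3) as written: with your choice $n_j = q_{k_j+1}-1$, $n_j^2 d_{n_j}^{\mathcal F}/\ln n_j$ tends to $0$, not $\infty$.

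The rest of the machinery is fine: L\'evy's $\ln q_k \sim \gamma k$, the Borel--Cantelli convergence bound giving $a_k = o(k^{1+\epsilon})$ a.s., and the Borel--Bernstein divergence theorem (the correct a.s. tool here, which you rightly flag as the step that cannot be done by naive independence) giving $\limsup a_k/k = \infty$ a.s.\ are all correct and are indeed the ingredients that make the four asymptotics work. Claims (1) and (2), which use $n = q_k$ where $d_n^{\mathcal F}(x) = |x - p_k/q_k|$ does hold, go through. Claim (4) also goes through because the order-$a_{k+1}$ upper bound on the maximum over the block is still correct, even though its location is not. To repair (3), and to make the block-by-block picture honest, you need to track the competition between $p_k/q_k$ and the largest available semiconvergent as $n$ varies, rather than pretending $d_n^{\mathcal F}$ is frozen at $|x - p_k/q_k|$ across the whole interval.
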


\begin{remark} These very good estimates suggest that in the Diophantine case we might have similar results.  One reason for expecting this is that the arguments for the approximation by Farey fractions often take advantage of properties of Farey fractions that also give insights into the Three Gaps Theorem for the Weyl sequence in the Diophantine model, see Polanco, Schultz, and Zaharescu~\cite{PSZ}.  For a recent article about the Three Gap Theorem itself, see T. Van Ravenstein~\cite{VR}.  What is thus suggested is that we can perhaps expect the gaps in $(\{n\theta\})$ to be on the order of $\ln(n)/n$ for a.e. $\theta$.  But the precise over and under-estimates in terms of limit supremum and limit infimum may cause some variation in this.  But also, perhaps for a.e. $\theta$, there is some constant $C(\theta)$ such that
\[\int_0^1 \min\limits_{1 \le k \le n}|x - \{k\theta\}|\, dx \sim \frac {C\ln (n)}n\]
as $n\to \infty$.
\end{remark}

\begin{remark}  We should observe that there is a classical discrepancy estimate when using the Farey fractions $\mathcal F_n$.   The first result in this direction appears in Niederreiter~\cite{N1}; he shows that using the $N_n$ points in $\mathcal F_n$, one has for the discrepancy $\frac {c_1}{\sqrt N_n} \le D_{N_n}^{\mathcal F} \le \frac {c_2}{\sqrt {N_n}}$ for some constants $c_1$ and $c_2$.  Following this, Dress~\cite{FD} gave the very nice explicit value: $D_{N_n}^{\mathcal F} = \frac 1n$.  Given that $N_n \sim \frac {3n^2}{\pi^2}$, this explains the result that Niederreiter had obtained earlier on the discrepancy of the Farey points.
\end{remark}

The asymptotic deviation (in a suitable measure) of the Farey fractions from uniformly placed points is equivalent to the Riemann Hypothesis.  The classical result is this.  Take the Farey fractions $\mathcal F_n$ and let $N_n$ be the cardinality of $\mathcal F_n$.  Let $(f^n_k: 1 \le k \le N_n)$ be $\mathcal F_n$ written in increasing order.  Consider $\Delta_n(s) = \sum\limits_{k=1}^n |x_k - \frac kn|^s$ for some fixed $s$.  Franel~\cite{F} proved that the Riemann Hypothesis is equivalent to $\Delta_n(2) = O(n^r)$ for all $r > -1$.  In particular, this would show that $\Delta_n(2) = O(1/\sqrt n)$, and hence this sum deviation tends to zero as $n \to \infty$.  Also, Landau~\cite{Landau} showed that the Riemann Hypothesis is equivalent to $\Delta_n(1) = O(n^r)$ for all $r > 1/2$.

This way of measuring the offset from a regular distribution represented by uniformly spaced points suggests another way in which we can test our sequence $\B{\alpha}$ for the distance from optimal $n$-means.  We take any sequence $\B{\alpha} = (\alpha(k):k\ge 1)$ generated by random models in Section~\ref{sec2}, Diophantine models in this section, or dynamical methods as in Section~\ref{sec4}.  We take $x_1,\dots,x_n$ to be $\alpha(k),k=1,\dots,n$ written in increasing order.  Then consider the  difference $\Delta_n^{\B {\alpha}}(s) = \sum\limits_{k=1}^n |x_k - \frac kn|^s$ for some fixed $s$, e.g. $s =1$ or $ s=2$. What is the best over-estimate $\rho(n)$ in each of these cases so that $\Delta_n^{\B{\alpha}} (s)= O(\rho(n))$ for all $n \ge 1$?  Indeed, when do we have $\Delta_n^{\B{\alpha}}(s) \to 0$ as $n\to \infty$? This is at least conjecturally the case for the Farey fraction estimate in place of the Weyl sequence. What happens in the Diophantine model?  What happens with a random model or a dynamical systems model?  We do not know the answers here yet, but something can be said in the Diophantine case.  Given $\theta$, take $(x_k^{(\theta,n)}: 1 \le k \le n)$ to be $\{\{k\theta\}: 1 \le k \le n\}$ written in increasing order.  Let $\Delta_n^\theta(s) = \sum\limits_{k=1}^{N_n} |x_k^{(\theta,N_n)} - f_n^k|^s$.  What is the best rate control for this in terms of $n$, given fixed $\theta$?  It is not hard to see using the Three Gaps Theorem that in this case $\Delta_n^\theta (s) = O(n)$.  So to get smaller sizes for $\Delta_n^{\mathcal M}(s)$, we will have to switch to a random model or dynamical model $\mathcal M$ that is not as rigid as the Diophantine model.

\begin{remark}\label{Delta}  Given two sequences $\alpha_1$ and $\alpha_2$, it is not clear what geometric or measure-theoretic property is equivalent to a rate control on $\Delta_n^{(\alpha_1,\alpha_2)}(s) = \sum\limits_{k=1}^n |\alpha_1(k) - \alpha_2(k)|^s$.
\end{remark}

\begin{remark}  It is not too difficult to prove a Baire category result of this type.  Consider a Lebesgue measure-preserving, ergodic invertible map $\tau$ of $[0,1]$.  Given $y\in [0,1]$, let $(x_k^{(\tau,y)}:1\le k\le n)$ be $(\tau^k(y):1 \le k\le n)$ in increasing order.  For $p, 0 < p < \infty$, let the deviation $\Delta_n^{(\tau,y)}(s) =\sum\limits_{k=1}^n |x_k^{(\tau,y)} - \frac kn|^s$.  Given any rate $\rho_n > 0$ such that $\rho_n /n \to 0$ as $n\to \infty$, and any $\epsilon > 0$, the set $\mathcal B$ consisting of mappings $\tau$ such that $\Delta_n^{(\tau,y)}(s) = O(\rho_n)$ for all $y \in B$ with $m(B) \ge \epsilon$, is first category in the ergodic mappings with the usual symmetric pseudo-metric.  Hence, the generic mapping yields no rate result at all.  It is not actually clear at this time if there are ergodic mappings for which there are rate results.  The obvious candidate would be a rotation by an angle whose continued fraction expansion has bounded elements, like the Golden Mean ratio.
\end{remark}

\end{document}